\documentclass[12pt,oneside]{amsart}

\usepackage{amscd}
\usepackage{amssymb}
\usepackage{amstext}
\usepackage{amsthm}
\usepackage{mathrsfs}

\linespread{1.2}

\usepackage{latexsym}
\usepackage{xcolor}
\numberwithin{equation}{section}

\DeclareMathOperator{\dist}{dist}
\DeclareMathOperator{\Span}{Span}
\DeclareMathOperator{\supp}{supp}
\DeclareMathOperator{\card}{card}

\renewcommand{\phi}{\varphi}

\newtheorem{Thm}{Theorem}
\newtheorem{theorem}[Thm]{Theorem}
\newtheorem{lemma}[Thm]{Lemma}

\newtheorem{proposition}[Thm]{Proposition}

\newtheorem{definition}[Thm]{Definition}

\begin{document}
\sloppy
\title[On the chain structure in the de Branges spaces]
{On the chain structure in the de Branges spaces}

\author{Yurii Belov, Alexander Borichev}
\address{Yurii Belov,
\newline Department of Mathematics and Computer Sciences, St.~Petersburg State University, St. Petersburg, Russia 
\newline {\tt j\_b\_juri\_belov@mail.ru}
\smallskip
\newline \phantom{x}\,\, Alexander Borichev,
\newline Aix-Marseille University, CNRS, Centrale Marseille, I2M, Marseille, France
\newline {\tt alexander.borichev@math.cnrs.fr}
}

\keywords{Hilbert spaces of entire functions, de Branges spaces, indivisible intervals, exponential type.}

\subjclass[2020]{Primary: 46E22, Secondary: 30D15, 47B32}

\thanks{
The first author was supported by a grant of the Government of the Russian Federation for the state support of scientific research, carried out under 
the supervision of leading scientists, agreement 075-15-2021-602 and Russian Foundation for Basic Research grant 20-51-14001-ANF-a.
The second author was supported by the project ANR 18-CE40-0035.} 

\begin{abstract} We study the indivisible intervals and the monotonicity of the growth of the exponential type in the chains of de Branges subspaces 
in terms of the spectral measure. We prove that for spectral measures supported on $\mathbb Z$, there exist at most two subspaces of the same type, which then bound an indivisible interval. Furthermore, in this case, we study possible locations of the indivisible intervals.
\end{abstract}

\maketitle

\section{Introduction and main results}

One of the main parts of the de Branges theory of Hilbert spaces of entire functions is the study of the chains of the de Branges subspaces 
of a given de Branges space. There are different equivalent definitions of de Branges spaces (axiomatic approach, in terms of 
Hermite--Biehler functions, in terms of the Hamiltonians, as the weighted Cauchy transforms, spaces isometrically imbedded into $L^2$ with respect to a measure on the real line). For more information on the de Branges theory see \cite{bo1,bo2,bo3} and the references therein. For some recent progress see, for example, \cite{re1,re3}. 

In this paper we use the weighted Cauchy transform definition of the de Branges spaces. 
Let $T=\{t_n\}_{n\in\mathcal{N}}$ be a discrete subset of the real line and let $\mu=\sum_{n\in\mathcal{N}}{\mu_n\delta_{t_n}}$ be a positive measure such that 
\begin{equation}
\sum_{n\in\mathcal N}\frac{\mu_n}{t^2_n+1}<\infty.
\label{mu1}
\end{equation}
Fix an entire function $A$ real on the real line with simple zeros at $T=\supp\mu$ and define the corresponding de Branges space (in the weighted Cauchy transform form)
$\mathcal{HC}(\mu)$ as follows:
$$
\mathcal{HC}(\mu)=\mathcal{HC}(A,\mu)=\biggl\{f(z)=A(z)\sum_{n\in\mathcal N}\frac{a_n\mu^{1\slash2}_n}{z-t_n}: \{a_n\}_{n\in\mathcal N}\in \ell^2\biggr\},
$$
$$
\|f\|_{\mathcal{HC}(\mu)}=\|a_n\|_{\ell^2}.
$$

We study an important class of de Branges spaces corresponding to the so called canonical systems
on finite interval. Namely, given a $2\times2$ real summable a.e. positively semi-defined matrix function $H$ (Hamiltonian) on a finite interval $[0,L]$, we consider the  system 
$$
J Y'(t)=zH(t)Y(t),\qquad t\in[0,L],\qquad J=\biggl{(}\begin{array}{ccc} 0 & -1 \\
1 & 0
\end{array}\biggr{)},
$$
 where $z\in\mathbb C$ is the so called spectral parameter, $Y$ is an absolutely continuous column vector function such that $Y(0)^T=(0,1)$ and
 $Y(L)^T=(A,B)$. The entire functions $A$ and $B$ are real on the real line with simple real interlacing zeros. We define a measure $\mu$ supported on $\mathcal{Z}_A$, 
where $\mathcal{Z}_F$ is the zero set of an entire function $F$, with masses $B(t)/A'(t)$, $t\in \mathcal{Z}_A$, and associate to the canonical system with Hamiltonian $H$ the de Branges space
$\mathcal{HC}(A,\mu)$.

It is known that a de Branges space $\mathcal{HC}(A,\mu)$ corresponds to a canonical system
on a finite interval if and only if it is regular, that is,
$$
z\mapsto \frac{f(z)-f(w)}{z-w}\in\mathcal{HC}(A,\mu),
$$
whenever $f\in\mathcal{HC}(A,\mu)$, $w\in\mathbb C$.
This is equivalent to the condition that the functions in our space are in the Cartwright class and  
\begin{equation}
\sum_{n\in\mathcal N}\frac{1}{\mu_n(t^2_n+1)A'(t_n)^2}<\infty,
\label{mu2}
\end{equation}
and from now on we assume that this condition is satisfied.

By the de Branges theory, the de Branges subspaces of the space $\mathcal{HC}(\mu)$ constitute
a chain $\{\mathcal{HC}(\mu_s)\}$ ordered by inclusion. Every such $\mathcal{HC}(\mu_s)$ corresponds to a point $s$ or to a subinterval 
$(a,s]$ of $(0,L]$ and to the restriction of our Hamiltonian $H$ to $[0,s]$. We set $(A_s,B_s)=Y(s)^T$ and 
we associate to the space $\mathcal{HC}(\mu_s)$ the corresponding entire function $A_s$. 
We have $\mathcal{Z}_{A_s}=\supp \mu_s$. 
Given the de Branges space $\mathcal{HC}(\mu)$ we denote the corresponding chain of the de Branges subspaces 
by ${\rm Chain}(\mu)$.
We are interested in the so called $H$-indivisible intervals in this chain,
that is the semi open maximal intervals $I=(a,s]\subset[0,L]$ such that $H$ is a degenerate constant matrix on $I$ and, hence, the 
subspaces $\mathcal{HC}(\mu_s)$ coincide for $s\in I$.

If the spectrum of $\mu$ is $\mathbb Z$, then one can easily verify (see Section~\ref{se2} below) that 
the chain does not contain indivisible end intervals $(0,a]$ and $(a,L]$. 
It is of interest to study the indivisible intervals inside the chain.

\begin{theorem} If $\supp(\mu)=\mathbb Z$, and the space $\mathcal{HC}(\mu)$ is regular, then the corresponding chain ${\rm Chain}(\mu)$ can contain one indivisible interval 
and cannot contain two contiguous indivisible intervals. 
\label{onetwoint}
\end{theorem}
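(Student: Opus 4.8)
The plan is to translate everything into the behaviour of the exponential type along the chain and then to exploit the rigidity forced by $\supp(\mu)=\mathbb Z$. First I would record the relevant facts about the exponential type $\tau(\sigma)$ of $\mathcal{HC}(\mu_\sigma)$ (equivalently of $E_\sigma=A_\sigma-iB_\sigma$) for $\sigma\in(0,L]$. It is nondecreasing, and by the de~Branges formula $\tau(\sigma)=\int_0^\sigma\sqrt{\det H(t)}\,dt$ (the integrand lies in $L^1$ since $\sqrt{\det H}\le\tfrac12\operatorname{tr}H$ and $H$ is summable) it is continuous; in particular it is constant on every indivisible interval, where $H$ is a rank-one constant matrix, so $\det H\equiv 0$ (equivalently, the transfer matrix across such an interval is affine in $z$ and cannot increase the type). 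Moreover, since $\supp(\mu)=\mathbb Z$ the function $A$ is real with a simple zero at each integer and no other real zero; being in the Cartwright class by regularity, it has exponential type $\pi$ by the zero-density formula, so $\tau(L)=\pi$. (That the chain has no indivisible end interval, hence $\tau(0^+)=0$, is the content of Section~\ref{se2}.)

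Now the reduction. Suppose ${\rm Chain}(\mu)$ contained two contiguous indivisible intervals $(a,b]$ and $(b,c]$, necessarily of distinct types $\beta_1\ne\beta_2$. The type is the constant $\tau(b)=:\tau^\ast$ on $(a,b]$, hence by continuity it equals $\tau^\ast$ on $(b,c]$ and at $a$, with $\tau^\ast\in(0,\pi)$. Therefore
\[
\mathcal{HC}(\mu_a)\ \subsetneq\ \mathcal{HC}(\mu_b)\ \subsetneq\ \mathcal{HC}(\mu_\sigma)\qquad(\sigma\in(b,c])
\]
would be three pairwise distinct de~Branges subspaces of $\mathcal{HC}(\mu)$ of one and the same exponential type $\tau^\ast$; here each inclusion is strict and in fact one-dimensional, since it corresponds to entering a single indivisible interval. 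Consequently the theorem follows from the rigidity statement (which is also how the abstract phrases it): \emph{when $\supp(\mu)=\mathbb Z$, ${\rm Chain}(\mu)$ contains at most two distinct subspaces of each exponential type}, and this must be proved directly, i.e.\ without invoking the conclusion we are after. Note also that the possibility of \emph{one} indivisible interval is a genuine phenomenon: for $\mu=\tfrac1{\pi^2}\sum_n\delta_n$ one has $\mathcal{HC}(\mu)=PW_\pi$ with chain $\{PW_a\}_{0\le a\le\pi}$ and no indivisible interval, and a suitable perturbation of this measure produces exactly one.

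Finally, the proof of the rigidity statement, which is where $\supp(\mu)=\mathbb Z$ enters essentially and which I expect to be the main obstacle. Assume $\mathcal H_0\subsetneq\mathcal H_1\subsetneq\mathcal H_2$ in the chain all have type $\tau^\ast<\pi$, with Hermite--Biehler functions $E_j$ and increasing phase functions $\phi_j$ on $\mathbb R$; since the type is constant on the interval of $\sigma$'s joining them, $H$ has rank $\le 1$ a.e.\ there while the space grows by at least two dimensions. I would combine: (i) the isometric inclusions force $\phi_0\le\phi_1\le\phi_2\le\phi$ modulo $\pi$, where $\phi$ is the phase of $\mathcal{HC}(\mu)$, and $|E_i/E_j|\le1$ in $\mathbb C^+$ for $i\le j$; (ii) equality of types gives $\phi_j(x)=\tau^\ast|x|+O(1)$ while $\phi(x)=\pi|x|+O(1)$; (iii) the identity $\|f\|_{\mathcal{HC}(\mu)}^2=\sum_n|f(n)|^2/(\mu_nA'(n)^2)$, which realizes $\mathcal{HC}(\mu)$, and hence each $\mathcal H_j$, isometrically inside a weighted $\ell^2(\mathbb Z)$, so that the weighted reproducing kernels at the integers form a Parseval frame in each $\mathcal H_j$; and (iv) the pinning $\phi(n)\equiv\pi/2\pmod\pi$ coming from $A(n)=0$, together with the control of the weights $1/(\mu_nA'(n)^2)$ between the two polynomial bounds furnished by \eqref{mu1} and \eqref{mu2}. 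The contradiction I expect is a counting one: the number of zeros of $A_\sigma$ in $[-R,R]$ — the number of atoms of $\mu_\sigma$ there — is $\tfrac{2\tau^\ast}{\pi}R+o(R)$ with a constant term that, given the Parseval-frame/$\mathbb Z$-sampling constraint and the pinning (iv), cannot accommodate two genuinely distinct one-dimensional enlargements of $\mathcal H_0$ at the same type (concretely, the two ``frozen'' entire functions $u_i=\cos\beta_i\,A_b+\sin\beta_i\,B_b$ attached to the two indivisible intervals span $\langle A_b,B_b\rangle$ and both lie in $\mathcal{HC}(\mu)$, which forces incompatible sampling data along $\mathbb Z$ in view of \eqref{mu1}--\eqref{mu2}). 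Making the lower-order terms in the zero-counting function of $A_\sigma$ precise and uniform across the relevant interval is the technical heart of the matter; granting the rigidity statement, the reduction above closes the proof.
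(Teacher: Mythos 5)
Your reduction of the ``no two contiguous indivisible intervals'' claim to the rigidity statement ``the chain contains at most two subspaces of each exponential type'' is logically sound as a reduction, but it inverts the architecture of the argument and leaves the hard part unproved. That rigidity statement is Theorem~\ref{thm6} of the paper, and its proof there \emph{uses} Theorem~\ref{onetwoint} (to dispose of the case $\dim(\mathcal H_{t_1,\nu}\ominus\mathcal H_{t_2,\nu})<\infty$), together with the Mitkovski--Poltoratski theorem on P\'olya sequences and the second Beurling--Malliavin theorem for the infinite-dimensional case; so you are proposing to establish a strictly stronger statement first. The sketch you offer for it --- ingredients (i)--(iv) culminating in an ``expected'' counting contradiction --- is not an argument, and a zero-counting/density contradiction is unlikely to exist at all: the densities of the two ``frozen'' functions are perfectly consistent with $\mathbb Z$-sampling (their types add up to at most $\pi$, exactly as in the admissible one-interval configuration), and the genuine obstruction is of a finer, integrability nature. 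The paper's route is: by Lemma~\ref{le2}, two contiguous indivisible intervals produce entire functions $G,S$ real on $\mathbb R$ with $G(z)S(z)/\sin(\pi z)=\sum_n c_n/(z-n)$, $c_n\ge 0$, $\{c_n\}\in\ell^1$, $S\in\ell^2(\mu)$ and $zG\in\ell^2(1/\mu)$; Cauchy--Schwarz against \eqref{mu1}--\eqref{mu2} gives $\sum_n\bigl(|S(n)|+(1+|n|)^{-1}\bigr)\bigl(|n|\,|G(n)|+(1+|n|)^{-1}\bigr)<\infty$; by the Cartwright theorem both $G$ and $S$ have positive type less than $\pi$, whence $G\in L^1(\mathbb R)$ and $S(x)/(1+|x|)\in L^1(\mathbb R)$; and the Khrushchev--Vinogradov version of Boole's lemma forces $|G(x)S(x)|\asymp|x|^{-1}$ on a set of infinite logarithmic length, contradicting these $L^1$ bounds. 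Nothing in your sketch substitutes for this last step, which is where $\supp\mu=\mathbb Z$ (through the $\ell^1$ condition on the residues $c_n$) actually bites.

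There is a second gap: the positive half of the theorem (the chain \emph{can} contain one indivisible interval) is asserted, not proved; ``a suitable perturbation of this measure produces exactly one'' is precisely what must be exhibited. The paper does this explicitly via Lemma~\ref{le1}: take $G(z)=\prod_{n\geq1}\bigl(1-\frac{z}{2n-1}\bigr)\bigl(1+\frac{z}{2n}\bigr)$, $S(z)=\sin(\pi z)/(zG(z))$, and $\mu_n=|n|^{-1/2}$ for $n\in\mathcal{Z}_G$, $\mu_n=|n|^{1/2}$ for $n\in\mathcal{Z}_S$, $\mu_0=1$, and then checks \eqref{mu1}, \eqref{mu2} and the hypotheses of the lemma. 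Your preliminary observations (monotonicity and continuity of the type along the chain, its constancy on indivisible intervals, $\tau(L)=\pi$, absence of indivisible end intervals) are correct, but they only set the stage.
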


When the support of the measure $\mu$ is $\mathbb Z$, such chains may contain infinitely many non-contiguous indivisible intervals, see Theorem~\ref{th2} below. 
On the other hand, these indivisible intervals are somewhat separated, see Theorem~\ref{thm6}. 

Furthermore, if the support of the measure $\mu$ is similar enough to $\mathbb Z$, then we get results analogous to Theorem~\ref{onetwoint}, see Section 3.

\subsection{Exponential type}

Another important characteristic of a de Branges space $\mathcal{HC}(\mu)$ is the exponential type ${\rm Type}(\mathcal{HC}(\mu))$, that is the exponential type $t(A)$ of the function $A$. 

\begin{theorem} Let $\Sigma$ be a countable subset of the interval $(0,\pi)$. There exists a measure 
$\mu$ supported on $\mathbb Z$ such that the space $\mathcal{HC}(\mu)$ is regular, and 
the corresponding chain ${\rm Chain}(\mu)$ contains indivisible intervals $J_s$ with 
${\rm Type}(\mathcal{HC}(\mu_t))=s$, for all $t\in J_s$, $s\in\Sigma$.
\label{th2}
\end{theorem}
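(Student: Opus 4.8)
The plan is to reverse--engineer $\mu$ from the Hamiltonian side; since $\supp\mu=\mathbb Z$ and $\mathcal{HC}(\mu)$ is regular, $A$ belongs to the Cartwright class and has zero set exactly $\mathbb Z$, so $A(z)=c\sin\pi z$ with $c\in\mathbb R\sm\{0\}$. Hence every admissible space has exponential type $\pi$, equals some $\mathcal{HC}(\sin\pi z,\{\mu_n\}_{n\in\mathbb Z})$, and the only free datum is the weight sequence, constrained by \eqref{mu1}--\eqref{mu2} (here: $\sum_n\mu_n/(n^2+1)<\infty$ and $\sum_n 1/(\mu_n(n^2+1))<\infty$). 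In terms of the Hamiltonian $H$ on $[0,L]$ one has ${\rm Type}(\mathcal{HC}(\mu_t))=\int_0^t\sqrt{\det H}$; on an $H$-indivisible interval $\det H\equiv 0$, so there the type is locally constant and equals the type--parameter of the interval's left endpoint. Thus it suffices to choose $\{\mu_n\}$ so that the Hamiltonian of $\mathcal{HC}(\sin\pi z,\mu)$ has, for each $s\in\Sigma$, an indivisible interval whose left endpoint carries type--parameter $s$.

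\emph{One indivisible interval at a prescribed type.} Fix $s_0\in(0,\pi)$, a small $\ell>0$ and a large $N$. One wants a Hamiltonian on $[0,\pi+\ell]$ that is the identity on a set of total length $\pi$, with one constant rank--one block of length $\ell$ and angle $\phi$ placed exactly where the accumulated type equals $s_0$. The transfer matrix over that block is affine in $z$, and composing it with the identity flow $E_t=-ie^{itz}$ yields $A_{\pi+\ell}(z)=\sin\pi z-\ell z\sin(s_0z+\phi)\sin((\pi-s_0)z-\phi)$, which is \emph{not} a multiple of $\sin\pi z$. The extra term must be cancelled. One does so by deforming the two identity stretches into $\det\equiv1$ Hamiltonians whose Sturm--Liouville potential is concentrated at large $|n|$ --- equivalently, by modifying the weights on a sparse set $S\subset\{|n|>N\}$ --- and tuning $\phi,\ell$ and the deformation so that $A_{\pi+\ell}\in\mathbb R\cdot\sin\pi z$ while $\mu_n=1$ for $n\notin S$. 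In the Cauchy--transform picture this means choosing the interlacing function $B_{\pi+\ell}$ so that $E_{\pi+\ell}=\sin\pi z-iB_{\pi+\ell}$ is Hermite--Biehler with $\mu_n=B_{\pi+\ell}(n)/A'(n)=1$ off $S$, and so that the de Branges chain of $E_{\pi+\ell}$ exhibits near type $s_0$ the indivisible--interval evolution
\[
E_t=\Bigl(1+\tfrac{iz(t-a)}{2}\Bigr)E_{s_0}+\tfrac{iz(t-a)}{2}e^{-2i\phi}E_{s_0}^{*},\qquad t\in J_{s_0},\ |J_{s_0}|=\ell,
\]
for a suitable sub--Hermite--Biehler function $E_{s_0}$ of type $s_0$. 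The room to satisfy all these constraints comes precisely from the sparse set $S$, and regularity of the block is a routine check of \eqref{mu1}--\eqref{mu2}.

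\emph{Assembling the countable family.} Enumerate $\Sigma=\{s_j\}_{j\ge1}$, pick $\ell_j>0$ with $\sum_j\ell_j<\infty$ and integers $N_1<N_2<\cdots\uparrow\infty$ growing rapidly, and build the weights iteratively: at stage $k$, one has a Hamiltonian containing exactly the blocks at $s_1,\dots,s_k$, with $A\in\mathbb R\cdot\sin\pi z$ and $\mu_n=1$ off a sparse set inside $\{|n|>N_1\}$; at stage $k+1$ one inserts, using only integers of modulus $>N_{k+1}$, an additional rank--one block at type $s_{k+1}$ and re--adjusts the $\det\equiv1$ part so as to restore $A\in\mathbb R\cdot\sin\pi z$ without altering the rank--one blocks already placed (which therefore remain indivisible and still carry type--parameters $s_1,\dots,s_k$). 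Taking the $N_j$ sufficiently sparse and the deviations mild, the limiting weight sequence satisfies \eqref{mu1}--\eqref{mu2}, the Hamiltonians converge in $L^1_{\rm loc}$, and by continuity of the de Branges correspondence the limit is the Hamiltonian of $\mathcal{HC}(\sin\pi z,\mu)$; it contains, for each $j$, an indivisible interval $J_{s_j}$ with ${\rm Type}(\mathcal{HC}(\mu_t))=s_j$ on $J_{s_j}$. This proves the theorem.

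\emph{Main obstacle.} The heart is the single--interval step: manufacturing, through a weight modification localized at arbitrarily large integers, an $H$-indivisible interval at a \emph{prescribed} type $s_0\in(0,\pi)$ while keeping $A$ a multiple of $\sin\pi z$ (so that $\supp\mu=\mathbb Z$) and the space regular. The obstruction is that the position along the chain and the zero set of $A_L$ are global functionals of the weights: one cannot just insert an indivisible interval without perturbing $\mathbb Z$, and must instead balance the insertion against a compensating deformation of the rest of the Hamiltonian so that the two effects cancel identically. Showing that the $\det\equiv1$ part always carries enough freedom to do this --- and to do it compatibly across the countable family, so that no previously built indivisible interval is destroyed --- is the remaining technical core.
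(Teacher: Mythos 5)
Your strategy (Hamiltonian surgery: insert a constant rank--one block at the prescribed type and compensate elsewhere so that $A$ remains a multiple of $\sin\pi z$) is not carried out at its decisive step, and you say so yourself. The claim that the $\det\equiv1$ part of the Hamiltonian, deformed only through weights at integers of large modulus, always has enough freedom to cancel the perturbation $-\ell z\sin(s_0z+\phi)\sin((\pi-s_0)z-\phi)$ \emph{identically} --- so that $\mathcal Z_{A_{\pi+\ell}}=\mathbb Z$ --- is exactly what needs proof, and nothing in the proposal establishes it: the correspondence weights $\mapsto$ Hamiltonian is non-local and nonlinear, so ``tuning $\phi$, $\ell$ and the deformation'' is not a routine perturbation argument. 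The same goes for the limit step (persistence of the previously placed indivisible intervals and of the equality $A=c\sin\pi z$ under $L^1_{\rm loc}$ convergence of Hamiltonians), which is asserted without justification. As written this is a plan with an acknowledged hole at its technical core, not a proof.

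For comparison, the paper never touches the Hamiltonian and instead uses the Cauchy-transform criterion of Lemma~\ref{le1}: an indivisible interval whose subspace has type $t(G)$ exists as soon as one factors $\sin(\pi z)/z=GS$ with $G,S$ real on $\mathbb R$, $S\in\ell^2(\mu)$ and $G\in\ell^2(1/\mu)$ (then $GS/A_0=1/z$ supplies the required $\ell^1$ residues with nonzero sum). Writing $\Sigma=\{\pi s_k\}$, one takes $G_k=\mathcal C_{\Lambda_k}$ for a nested family of symmetric sets $\Lambda_k\subset\mathbb Z\setminus\{0\}$ with strong linear asymptotics $s_kx-(1+r_k)/6$, so that $t(G_k)=\pi s_k$, sets $S_k=\sin(\pi z)/(zG_k)$, and chooses a single explicit weight $\mu_n=(1+|n|)^{(2u(n)-1)/3}$ whose exponent interpolates between parameters $a_k<r_k<b_k$ according to whether $n\in\Lambda_k$ or not; Lemma~\ref{dfs1} gives the pointwise size of $G_k$ and $S_k$ on $\mathbb Z$, and both $\ell^2$ conditions reduce to convergent series of the form $\sum_n(1+|n|)^{-1-\epsilon_k}$. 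No inverse spectral step, no cancellation, and no limiting procedure are needed; all types in $\Sigma$ are handled simultaneously by one explicit measure. If you want to keep your route you must actually prove the cancellation lemma; otherwise the function-theoretic construction is the way to close the argument.
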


One can express the exponential type of a de Branges space $\mathcal{HC}(\mu)$ 
in terms of the Hamiltonian $H$ of the corresponding canonical system. Namely, the  
Krein--de Branges formula (see e.g. \cite[Theorem 11]{bo3}) states that 
$$
{\rm Type}(\mathcal{HC}(\mu))=\int_0^L\sqrt{\det H(t)}\,dt.
$$

Let us recall here some key facts 
from the de Branges theory which are necessary to formulate our results. 

\begin{theorem}[\text{\cite[Theorem 17]{bo3}}] Given a positive measure $\nu$ on $\mathbb R$ such that 
\begin{equation}
\int_\mathbb{R}\frac{d\nu(t)}{1+t^2}<\infty,
\label{q11}
\end{equation}
there exists a chain of regular de Branges spaces $\mathcal H_{t,\nu}$, $(t\in(0,\infty)$ or $t\in(0,L])$ such that $\mathcal{H}_{t,\nu}$ is isometrically embedded in $L^2(\nu)$,
the set $\bigcup_t\mathcal H_{t,\nu}$ is dense in $L^2(\nu)$ and
$$
\mathcal H_{t_1,\nu}\subset\mathcal H_{t_2,\nu},\qquad t_1\le t_2.
$$
Every regular de Branges space isometrically embedded in $L^2(\nu)$ belongs to this chain. 
Furthermore, if $L<\infty$, then $T=\supp \nu$ is discrete, and for an entire function $A$ real on the real line with simple zeros at $T$, we have $\mathcal H_{L,\nu}=\mathcal{HC}(A,\nu^*)$, where $\nu^*(\{t\})=1/(\nu(\{t\})(A'(t))^2)$, $t\in\supp T$. Furthermore, $\mathcal H_{L,\nu}$ restricted to $T$ is equal to $L^2(\nu)$.
\label{ftr}
\end{theorem}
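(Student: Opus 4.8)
The plan is to derive the statement from two structural pillars of the de Branges theory --- the ordering theorem for de Branges subspaces and the canonical-system (Krein) correspondence --- together with an explicit reproducing-kernel computation for the maximal space. First, let $\mathcal F_\nu$ be the family of all regular de Branges spaces isometrically embedded in $L^2(\nu)$. By de Branges' ordering theorem any two members of $\mathcal F_\nu$ are comparable, one being isometrically contained in the other; hence $\mathcal F_\nu$ is totally ordered by inclusion, and \eqref{q11} guarantees it is nonempty. If $\bigcup\mathcal F_\nu$ were not dense in $L^2(\nu)$, then its $L^2(\nu)$-closure would be a proper closed subspace on which point evaluations remain bounded, and one could enlarge the chain --- a contradiction; thus the union is dense, and by comparability every regular de Branges space isometrically embedded in $L^2(\nu)$ belongs to this chain. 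When the chain has a top element $\mathcal H_{L,\nu}$, density forces its restriction to $T=\supp\nu$ to be dense in $L^2(\nu)$; being closed there (the embedding is isometric), this restriction is all of $L^2(\nu)$, which is the last assertion of the theorem.

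For the parametrization I would invoke the de Branges--Krein inverse spectral theorem: $\nu$ is, up to a change of variable, the spectral measure of a suitably normalized canonical system $JY'=zHY$ on an interval $[0,M]$ with $M\le\infty$, the restriction of $H$ to $[0,t]$ generating a de Branges space $\mathcal H_{t,\nu}\in\mathcal F_\nu$ that is nondecreasing in $t$ and, by the previous paragraph, exhausts $\mathcal F_\nu$. Relabeling, $M$ becomes $\infty$ or a finite $L$; the latter occurs exactly when $H$ has finite total length, and then $\mathcal H_{L,\nu}$ is the top element just discussed, while a standard fact of the theory (see \cite{bo3}) yields that the spectral measure of a finite-length canonical system has discrete support, i.e. $T=\supp\nu$ is discrete.

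It remains to identify $\mathcal H:=\mathcal H_{L,\nu}$ with $\mathcal{HC}(A,\nu^*)$; take $A$ to be the $A$-function of the Hermite--Biehler function generating $\mathcal H$, so $A$ is real on $\mathbb R$ with simple zeros exactly on $T$. By the de Branges reproducing-kernel formula, the reproducing kernel of $\mathcal H$ at $t_n\in T$ has the form $k_{t_n}(z)=c_nA(z)/(z-t_n)$ with $c_n$ real and nonzero (the competing numerator term drops out since $A(t_n)=0$). The $k_{t_n}$ are pairwise orthogonal because $A(t_m)=0$ for $m\ne n$, and by the fullness established above they form an orthogonal basis of $\mathcal H$. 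The reproducing property gives $\|k_{t_n}\|^2=k_{t_n}(t_n)=c_nA'(t_n)$, while isometry of the embedding gives $\|k_{t_n}\|^2=|k_{t_n}(t_n)|^2\nu(\{t_n\})=(c_nA'(t_n))^2\nu(\{t_n\})$, whence $c_nA'(t_n)=1/\nu(\{t_n\})$. Expanding an arbitrary $f\in\mathcal H$ in this basis then yields
$$
f(z)=\sum_n\frac{f(t_n)}{\|k_{t_n}\|^2}\,k_{t_n}(z)=A(z)\sum_n\frac{f(t_n)}{A'(t_n)(z-t_n)},\qquad \|f\|_{\mathcal H}^2=\sum_n|f(t_n)|^2\nu(\{t_n\}),
$$
and putting $\mu_n=\nu^*(\{t_n\})=1/(\nu(\{t_n\})(A'(t_n))^2)$ and $a_n=f(t_n)\,\nu(\{t_n\})^{1/2}\operatorname{sign}(A'(t_n))$ rewrites $f$ in exactly the form defining $\mathcal{HC}(A,\nu^*)$, with $\|f\|_{\mathcal H}=\|(a_n)\|_{\ell^2}$. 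Hence $\mathcal H_{L,\nu}=\mathcal{HC}(A,\nu^*)$.

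The genuinely hard input is the first two paragraphs: producing the gap-free chain and realizing an arbitrary $\nu$ subject to \eqref{q11} as the spectral measure of a canonical system on some $[0,M]$. This is precisely the content of de Branges' structure and ordering theorems together with Krein's inverse spectral theorem, which is why the statement is quoted from \cite{bo3}; once these are granted, the identification of the maximal space in the third paragraph is routine reproducing-kernel bookkeeping.
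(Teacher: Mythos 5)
The paper does not prove this statement at all: it is quoted verbatim from Romanov's survey (\cite[Theorem 17]{bo3}), which packages de Branges' ordering theorem together with the Krein--de Branges direct and inverse spectral theory, so there is no internal proof to compare against and your proposal must be judged as a reconstruction of that external argument. Your third paragraph is the one genuinely self-contained piece, and it is correct: at a zero $t_n$ of $A$ the reproducing kernel collapses to $c_nA(z)/(z-t_n)$ with $c_n$ real and nonzero, comparing the reproducing identity $\|k_{t_n}\|^2=c_nA'(t_n)$ with the isometry $\|k_{t_n}\|^2=(c_nA'(t_n))^2\nu(\{t_n\})$ gives $c_nA'(t_n)=1/\nu(\{t_n\})$, and the orthogonal expansion then rewrites $\mathcal H_{L,\nu}$ exactly as $\mathcal{HC}(A,\nu^*)$ with the stated weights and an isometric correspondence $f\mapsto(a_n)$. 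This is consistent with the kernel formulas the paper records at the start of Section~\ref{se2}.

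The step I would push back on is the density argument in your first paragraph. The $L^2(\nu)$-closure of $\bigcup\mathcal F_\nu$ is merely a closed subspace of $L^2(\nu)$: there is no reason its elements are restrictions of entire functions, that point evaluations are bounded on it, or that it is (or sits inside) a regular de Branges space strictly larger than every member of the chain, so ``one could enlarge the chain'' is unsupported as written. Density of the union is one of the nontrivial assertions of the theorem, and in the standard treatment it comes from the direct spectral theory of the canonical system (unitarity of the Weyl--Titchmarsh transform onto $L^2(\nu)$), i.e., from the machinery in your second paragraph rather than from a soft contradiction. Since your second paragraph already delivers density, this is a redundant rather than fatal flaw, but you should either delete the contradiction argument or replace it with the spectral-theoretic one. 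With that repair, the proposal is an accurate reduction to the ordering theorem and the inverse spectral theorem, which is precisely the level at which the paper itself treats the statement, plus a correct and complete verification of the $\mathcal{HC}(A,\nu^*)$ identification.
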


For example if $\nu$ is the Lebesque measure, then the corresponding de Branges chain consists of the Paley--Wiener spaces 
$\mathcal{PW}_a$, $a\in(0,\infty)$.


\medskip
The number $T=\sup_t({\rm Type}(\mathcal H_{t,\nu}))$ is called {\it the exponential type} of the measure $\nu$. One of the fundamental question of harmonic analysis
is to determine $T$ via $\nu$, see \cite{BS}, \cite{Po} and the references therein. We are interested in the closely related question about the {\it regularity} of growth of the exponential type.

\begin{definition} Given a positive measure $\nu$ on the real line satisfying \eqref{q11}, we say 
that $\nu$ generates a thin chain if for any type $t>0$ there exists at most one element $\mathcal H_{t,\nu}$
of the chain such that ${\rm Type}(\mathcal H_{t,\nu})=t$.
\end{definition}

In particular, Hamiltonians corresponding to thin chains satisfy the condition $\det H\ne 0$ a.e. on any interval and vice versa.

Given a measure $\nu$ satisfying \eqref{q11}, we define the {\it Hilbert transform} of $\nu$ by the formula 
\begin{equation}
\widetilde{\nu}(z)=\frac{1}{\pi}
\int_{\mathbb{R}}\biggl{(}\frac{1}{z-t}+\frac{t}{t^2+1}\biggr{)}\,d\nu(t).
\label{ht}
\end{equation}
The function $\widetilde{\nu}(z)$ is well defined for $z\in\mathbb{C}\setminus\mathbb{R}$. Moreover, it is well-known that for absolutely continuous measures $\nu$, 
$d\nu=w\,dt$, the Hilbert transform $\widetilde{\nu}$ exists on $\mathbb R$ if we understand the right-hand side of \eqref{ht} in the principal value sense. The Hilbert transform naturally appears
in many problems of harmonic analysis.

If the weight $w $ has convergent {\it logarithmic integral}, then it is well-known that the type of $w(x)dx$ is infinite (see \cite{BS}, \cite{Po}).
We are able to show that under some additional regularity assumptions, the measure $w(x)dx$ generates a thin chain.

\begin{theorem} Let $w$ be a $C^1$ smooth positive function such that
$$
w\in L^1\biggl{(}\frac{dx}{1+x^2}\biggr{)},\qquad \int_{\mathbb{R}}\frac{\log w(x)}{1+x^2}\,dx>-\infty,\qquad (\widetilde{\log w})'\in L^\infty(\mathbb{R}).
$$
Then the measure $w(x)dx$ generates a thin chain. 
\label{regth}
\end{theorem}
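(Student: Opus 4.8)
The plan is to show that under the hypotheses on $w$, any two regular de Branges spaces in the chain generated by $w(x)\,dx$ that share the same exponential type must coincide; equivalently, the Hamiltonian $H$ of the corresponding canonical system satisfies $\det H \neq 0$ a.e., so there are no indivisible intervals. The strategy is to work with the Hermite--Biehler functions $E_t = A_t - iB_t$ attached to the chain and to track the exponential type via the Krein--de Branges formula ${\rm Type}(\mathcal H_{t,w\,dx}) = \int_0^t \sqrt{\det H(s)}\,ds$. An indivisible interval is exactly a maximal interval on which this integral is constant, i.e. $\det H \equiv 0$. So the theorem is equivalent to the assertion that $\det H > 0$ a.e. on $[0,L]$.

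First I would recall the description of indivisible intervals in terms of the phase function. Writing $E_t(x) = |E_t(x)| e^{-i\phi_t(x)}$ on $\mathbb R$, an indivisible interval of ``type $\beta$'' (direction) corresponds to the situation where, on that interval, $A_t/B_t$ is constant and the only motion in the chain is a rescaling; concretely $E_t = e^{(s-t)\cdot(\text{something})} \cdot(\cos\beta + i\sin\beta)$-type behaviour, and crucially the boundary Hermite--Biehler function $E$ of an indivisible interval has the form $E(z) = e^{-i\alpha}(\cos\beta\, P(z) - i \sin\beta\, P(z)\cdot(\text{affine}))$ — more usefully, $|E|^2$ behaves like a function whose logarithm has a \emph{bounded} Hilbert transform having a jump, or $\log|E(x)|$ fails the required smoothness. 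The key analytic input is the third hypothesis $(\widetilde{\log w})' \in L^\infty(\mathbb R)$: since $\mathcal H_{t,w\,dx}$ is isometrically embedded in $L^2(w\,dx)$ and regular, one has $|E_t(x)|^{-2} \asymp w(x)$ in an averaged sense along the chain (this is the standard relation $w = |E_t|^{-2}$ up to the part of $w$ not ``seen'' yet; more precisely $w\,dx \ge |E_t|^{-2}dx$ with equality of the relevant spectral data), hence $\log|E_t(x)| = -\tfrac12\log w(x) + (\text{harmonic-type correction})$, and $\widetilde{\log|E_t|}' = -\tfrac12(\widetilde{\log w})' + \cdots \in L^\infty$. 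But the phase $\phi_t$ satisfies $\phi_t' = \widetilde{(\log|E_t|)'}$ plus a positive term coming from $\det H$; more precisely, from the canonical system one gets $\phi_t'(x) \ge c\, w(x)^{?}$ or rather the growth of $\phi_t$ in $t$ is governed by $\det H$.

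The cleanest route: I would use the fact that for an indivisible interval $(a,b]$ with boundary data $E_a$ and $E_b = m E_a$ for an affine (Blaschke-type) factor, the exponential type jumps by a positive amount $\int_a^b \sqrt{\det H} = 0$ — no, so rather the type is constant, and an indivisible interval occurs precisely where $\sqrt{\det H}=0$. So suppose for contradiction that $\det H = 0$ on a set of positive measure, giving an indivisible interval $I$. On $I$ the Hamiltonian is a constant rank-one matrix $\xi\xi^T$; solving the canonical system explicitly on $I$ shows $E_b(z) = E_a(z)(1 + c(b-a)(\alpha z + \beta))$ for real constants depending on $\xi$. Taking $\log|E_b|^2 - \log|E_a|^2 = \log|1 + c(b-a)(\alpha x + \beta)|^2$, whose derivative is $\frac{2c(b-a)\alpha}{1 + c(b-a)(\alpha x + \beta)}$, and whose \emph{Hilbert transform} has a non-integrable singularity (or a logarithmic term with unbounded derivative) at the real zero of $1 + c(b-a)(\alpha x+\beta)$ — unless $\alpha = 0$, which is the degenerate ``end interval'' case already excluded in the introduction's remark. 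Since $\log|E_t|^2 = -\log w + h_t$ with $h_t$ having controlled Hilbert transform derivative (from the structure of the chain, $h_t$ is the potential coming from the part of the measure beyond $t$, which is nonnegative and whose Hilbert transform is monotone), we get that $(\widetilde{\log w})'$ would have to absorb this unbounded term, contradicting $(\widetilde{\log w})' \in L^\infty$. The $C^1$ smoothness and the convergence of the logarithmic integral are used to guarantee that $\log|E_t|$, $\widetilde{\log|E_t|}$ are well-defined pointwise with the principal-value formula and that the chain actually has positive (finite or infinite) exponential type, so the whole discussion is non-vacuous.

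The main obstacle I anticipate is making rigorous the relation between $\widetilde{\log w}$ and the phase derivatives $\phi_t'$ \emph{uniformly along the chain}, including at the point where an indivisible interval would sit. One has to control the ``tail potential'' $h_t(x) = \log(w(x)|E_t(x)|^2)$: showing it is nonnegative, that $\widetilde{h_t}$ is (essentially) monotone in $x$ on the complement of $\supp$, and that therefore $\widetilde{h_t}' $ cannot cancel an unbounded contribution of fixed sign. This likely requires the de Branges identity $\frac{d}{dt}\log E_t(z) = $ (something built from $H(t)$ and $E_t$) integrated across the indivisible interval, combined with the Poisson/Herglotz representation of $B_t/A_t$. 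Once that monotonicity-and-sign bookkeeping is in place, the contradiction with $(\widetilde{\log w})' \in L^\infty$ is immediate, and the theorem follows.
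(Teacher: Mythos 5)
Your proposal is a plan rather than a proof, and it contains two genuine gaps. First, you reduce ``thin chain'' to ``no indivisible intervals'' (equivalently, to $\det H\neq 0$ a.e.), but thinness is strictly stronger: the chain fails to be thin as soon as there is an interval on which $\det H=0$ a.e., and such an interval need not be indivisible --- the direction of the rank-one Hamiltonian may vary, in which case the subspaces $\mathcal H_t$ are pairwise distinct (possibly with $\dim(\mathcal H_{t_2}\ominus\mathcal H_{t_1})=\infty$) while all having the same type. Your explicit formula $E_b=E_a\bigl(1+c(b-a)(\alpha z+\beta)\bigr)$ is valid only for a \emph{constant} degenerate $H$, so the case that actually has to be excluded (same type, infinite-dimensional gap) is not touched by your argument. (Also, $\det H=0$ on a set of positive measure without an interval does not obstruct thinness, so even your contradiction hypothesis is not set up correctly.) Second, the analytic core of your argument --- the pointwise relation $\log|E_t|^2=-\log w+h_t$ with $\widetilde{h_t}'$ controlled uniformly along the chain --- is asserted, not proved; the isometric embedding $\mathcal H_t\hookrightarrow L^2(w\,dx)$ gives no pointwise comparison between $|E_t|^{-2}$ and $w$ (the discrepancy is governed by the Weyl function of the remaining part of the system), and you yourself flag this as ``the main obstacle'' without resolving it. As it stands, the contradiction with $(\widetilde{\log w})'\in L^\infty$ is not derived.

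For comparison, the paper's proof avoids the Hamiltonian/phase-function picture entirely. It first replaces $w$ by a comparable weight of the form $|H|^2$ for an entire function $H$ of finite exponential type with $|H(x)|^2\asymp w(x)$ (an atomization result; this is where the three hypotheses on $w$ enter). If two distinct spaces $\mathcal H_1\supsetneq\mathcal H_2$ in the chain had the same type, a nonzero $F_1\in\mathcal H_1\ominus\mathcal H_2$ would be orthogonal in $L^2(|H|^2dx)$ to all $F_2/(\cdot-\lambda)$, $\lambda\in\mathcal Z_{F_2}$, with $F_2$ an $A$-function of $\mathcal H_2$; rewritten in $\mathcal{PW}_{a+b}$, this says that the mixed system $\{k_\lambda\}_{\lambda\in\mathcal Z_H}\cup\{F_2H/(\cdot-\lambda)\}_{\lambda\in\mathcal Z_{F_2}}$ is incomplete, contradicting a known completeness theorem for such mixed systems of reproducing kernels. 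Note that this argument handles arbitrary same-type pairs in one stroke, with no case analysis on the structure of $H$. If you want to salvage your route, you would need both a treatment of non-constant degenerate Hamiltonians and a rigorous version of the tail-potential estimate, each of which is a substantial piece of work.
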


Sometimes the chain generated by a measure is not thin, but ``almost thin", that is, for every $t>0$ the chain contains at most two subspaces of type $t$. In particular, this is the case if the support of the measure is $\mathbb Z$, as shows the following result.

\begin{theorem} Let $\nu$ be a positive measure satisfying \eqref{q11} such that $\supp\nu=\mathbb Z$ 
and the corresponding chain of regular de Branges spaces $\mathcal H_{t,\nu}$ is defined on a finite interval $(0,L]$. 
Then for any $0<t_2<t_1\le L$ such that 
${\rm Type}(\mathcal H_{t_1,\nu})={\rm Type}(\mathcal H_{t_2,\nu})$, we have
$$
\dim(\mathcal H_{t_1,\nu}\ominus\mathcal H_{t_2,\nu})\le 1.
$$
\label{thm6}
\end{theorem}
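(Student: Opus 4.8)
The plan is to push the question onto the Hamiltonian $H$ of the associated canonical system and then to invoke Theorem~\ref{onetwoint}. Since ${\rm Type}(\mathcal H_{t_1,\nu})={\rm Type}(\mathcal H_{t_2,\nu})$, the Krein--de Branges formula gives $\int_{t_2}^{t_1}\sqrt{\det H(t)}\,dt=0$, so $\det H=0$ a.e.\ on $(t_2,t_1]$. On this set $H$ equals a.e.\ a nonnegative scalar times a rank-one projection $\xi_\phi\xi_\phi^T$, $\xi_\phi=(\cos\phi,\sin\phi)^T$, and along the chain $\mathcal H_{s,\nu}$ stays frozen exactly on the maximal intervals where the angle $\phi$ is a.e.\ constant, that is, on the $H$-indivisible intervals, and grows strictly across any subinterval on which $\phi$ is non-constant. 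Hence the whole statement reduces to the claim that $(t_2,t_1]$ lies inside a single $H$-indivisible interval: granting this, all the spaces $\mathcal H_{s,\nu}$, $s\in(t_2,t_1]$, coincide, unless $t_2$ is the left endpoint of that interval, in which case entering it produces the classical one-dimensional jump; either way $\dim(\mathcal H_{t_1,\nu}\ominus\mathcal H_{t_2,\nu})\le1$.

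So suppose $(t_2,t_1]$ is not contained in a single $H$-indivisible interval. Decompose $(t_2,t_1)$ into its maximal $H$-indivisible subintervals together with the complementary set $Z$, on which $\det H=0$ but $\phi$ is not locally constant. If $Z$ were null, then $\phi$ would be locally constant off a null set, hence constant on the connected set $(t_2,t_1)$, making it a single indivisible interval --- excluded. And if $(t_2,t_1)$ contained two or more maximal indivisible subintervals, they could not be contiguous: the measure $\nu^*$ of Theorem~\ref{ftr} has support $\mathbb Z$ as well, so Theorem~\ref{onetwoint} applied to ${\rm Chain}(\nu^*)$ forbids two contiguous indivisible intervals, and a positive-length piece of $Z$ would then have to separate them. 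In both cases we are reduced to the following assertion: there is no interval $(c,d)\subset(t_2,t_1)$ on which $\det H=0$ a.e.\ while $\phi$ is non-constant.

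This last assertion is the heart of the proof and the only place where $\supp\nu=\mathbb Z$ is genuinely used --- for a general regular measure such ``rotating'' pieces of $\{\det H=0\}$ do occur (take, e.g., $H(t)=\xi_t\xi_t^T$). I would establish it with the same analysis of the evolution of the pair $(A_s,\mu_s)$ along ${\rm Chain}(\nu^*)$ that underlies Theorem~\ref{onetwoint}: on $(c,d)$ all the generating functions $A_s$ share one exponential type $\tau$ while their zero sets $\mathcal Z_{A_s}=\supp\mu_s$ vary continuously, yet $\supp\nu^*=\mathbb Z$ pins the growth and the zero distribution of every $A_s$ down so rigidly (through comparison with the canonical product over $\mathbb Z$) that a stretch on which $\phi$ genuinely rotates is incompatible with the endpoint constraint that the top space have spectrum $\mathbb Z$; equivalently, such a stretch would make the chain move through infinitely many distinct de Branges subspaces of $L^2(\nu)$ of one and the same type, which the rigidity of the spectrum $\mathbb Z$ excludes. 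Carrying out this rigidity argument is where the real difficulty lies; the remaining steps are the standard dictionary between the chain, the Hamiltonian, and the Krein--de Branges formula.
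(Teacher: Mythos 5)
Your reduction is sound as far as it goes: the Krein--de Branges formula does give $\det H=0$ a.e.\ on $(t_2,t_1]$, the case where $(t_2,t_1]$ sits inside one indivisible interval does yield $\dim(\mathcal H_{t_1,\nu}\ominus\mathcal H_{t_2,\nu})\le 1$, and the exclusion of two contiguous indivisible intervals via Theorem~\ref{onetwoint} applied to the discrete representation from Theorem~\ref{ftr} matches the paper (which phrases the same step as: if $\dim(\mathcal H_1\ominus\mathcal H_2)<\infty$ then it is at most $1$). But the statement you isolate as ``the heart of the proof'' --- that there is no interval on which $\det H=0$ a.e.\ while the angle $\phi$ rotates, equivalently that the chain cannot pass through infinitely many distinct subspaces of one exponential type --- is exactly the content of the theorem, and you do not prove it. Your appeal to ``the same analysis that underlies Theorem~\ref{onetwoint}'' does not work: the proof of Theorem~\ref{onetwoint} is an $\ell^1/L^1$ summability argument (Cartwright, Khrushchev--Vinogradov) tailored to a \emph{single} pair $(G,S)$ witnessing two contiguous jumps, and it gives no control over an infinite-dimensional gap $\mathcal H_1\ominus\mathcal H_2$ with no indivisible intervals at all.

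The paper's treatment of this case requires genuinely new machinery that your sketch does not anticipate. One picks $F\perp\mathcal H_2$ in $\mathcal H_1$, converts the orthogonality to an interpolation identity $\sum_n F(n)A_2(n)/(\mu_n(z-n))=A_2S/\sin(\pi z)$, exploits the \emph{infinite} dimension of the gap to arrange $100$ common non-integer zeros of $F$ and $S$ and hence polynomial decay of $F$ on $\mathbb R$; then one changes the isometric Cauchy-transform representation (Lemmas~\ref{ap3} and~\ref{ap1}) so that the new spectrum $\{x_n\}$ avoids $\mathcal Z_F$, producing an entire function $R$ of zero exponential type bounded on $\mathcal Z_F\cap\mathbb R$; finally, since $\mathcal Z_F\cap\mathbb R$ fails to be a P\'olya sequence, the Mitkovski--Poltoratski theorem yields a long sequence of intervals on which $\mathcal Z_F$ has density $0$, whence $\mathcal Z_U$ has density $1$ there, and the second Beurling--Malliavin theorem forces $t(U)\ge 1$ and $t(F)=0$, contradicting the Cartwright theorem. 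None of these steps (the $100$-zero normalization, the re-representation lemmas, the P\'olya-sequence and Beurling--Malliavin input) is a routine consequence of the Hamiltonian dictionary, so the proposal as written has a genuine gap precisely where the theorem's difficulty lies.
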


Starting with a measure $\mu=\sum_{n\in\mathbb Z}\mu_n\delta_n$ satisfying the conditions of Theorem~\ref{onetwoint}, we can define 
$\nu=\sum_{n\in\mathbb Z}\mu^{-1}_n\delta_n$ satisfying the conditions of Theorem~\ref{thm6} and such that 
$\mathcal H_{L,\nu}=\mathcal{HC}(\sin\pi z,\mu)$, see Theorem~\ref{ftr}. Thus, Theorem~\ref{thm6} extends the result of Theorem~\ref{onetwoint} on the absence 
of contiguous indivisible intervals.

\subsection{Notation and organization of the paper} 
In this text, $A\lesssim B$ means that $A\le CB$ with a positive constant $C$,
$A \gtrsim B$ means $A\ge cB$ with a constant $c>0$, and $A\asymp B$ means that $A\lesssim B$ and
$A\gtrsim B$ simultaneously.

Some function theoretic criteria for the existence of (contiguous) indivisible intervals in a chain of the Branges spaces are given in Section~\ref{se2}. 
In Section~\ref{se3} we consider the indivisible intervals in the chains associated with the de Branges spaces represented as the Cauchy transforms with spectrum 
in $\mathbb Z$ or some perturbations of $\mathbb Z$. In Section~\ref{se4} we deal with the de Branges subspaces of the same type in a chain. 
Section~\ref{se5} describes some properties of different isometric Cauchy transform representations for the de Branges spaces.

\section{Indivisible intervals}
\label{se2}

In this section we deal with regular de Branges spaces. We start with some equivalent conditions for the existence of an indivisible  
interval in a de Branges chain.

\begin{lemma} 
\label{le7}
Given a de Branges space $\mathcal{HC}(\mu)$, the following assertions are equivalent:
\begin{enumerate}
\item[(i)] The chain ${\rm Chain}(\mu)$ contains an indivisible interval. 
\item[(ii)] For some subspaces in the chain, we have $\dim(\mathcal{HC}(\mu_s)\ominus \mathcal{HC}(\mu_a))=1$.
\item[(iii)] For some subspace $\mathcal{HC}(A,\nu)=\mathcal{HC}(\mu_s)$ in the chain, we have $A\sum_{n\in\mathcal N}\nu_n/(\cdot-t_n)\in \mathcal{HC}(A,\nu)$, where 
$\nu=\sum_{n\in\mathcal N}\nu_n\delta_{t_n}$.
\item[(iv)] There exists a function $G$ in $\mathcal{HC}(\mu)$ real on the real line with simple real zeros
such that $G$ is orthogonal to $G/(\cdot-\lambda)$, $\lambda\in\mathcal{Z}_G$.
\item[(v)] For some subspace $\mathcal{HC}(\mu_s)$ in the chain, the measure $\mu_s$ is finite.
\item[(vi)] For some subspace $\mathcal{HC}(\mu_s)$ in the chain, the domain of the operator of multiplication by $z$ is not dense in $\mathcal{HC}(\mu_s)$.
\end{enumerate}
Under the conditions of {\rm(iv)}, the de Branges space $\mathcal H$ spanned by $G$ and $G/(\cdot-\lambda)$, $\lambda\in\mathcal Z_G$, has exponential type equal to that of $G$. 
\end{lemma}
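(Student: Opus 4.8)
The plan is to handle the three pointwise conditions (iii), (v), (vi), which concern a single subspace $\mathcal{HC}(A,\nu)=\mathcal{HC}(\mu_s)$, by a direct computation in the Cauchy model, and then to tie them to the chain conditions (i), (ii) and to (iv) through de Branges' theory of indivisible intervals. Writing $f=A\sum_n a_n\nu_n^{1/2}/(\cdot-t_n)$ with $\{a_n\}\in\ell^2$, the functions $e_n:=A\nu_n^{1/2}/(\cdot-t_n)$ form an orthonormal basis of $\mathcal{HC}(A,\nu)$, and the function $S:=A\sum_n\nu_n/(\cdot-t_n)$ appearing in (iii) equals $\sum_n\nu_n^{1/2}e_n$; hence $S\in\mathcal{HC}(A,\nu)$ if and only if its coefficient sequence $\{\nu_n^{1/2}\}$ lies in $\ell^2$, i.e.\ if and only if $\nu$ is finite, which is (iii)$\Leftrightarrow$(v). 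For (vi) I would compute that $zf\in\mathcal{HC}(A,\nu)$ forces $\{t_na_n\}\in\ell^2$ together with $\sum_n a_n\nu_n^{1/2}=0$; since $\sum_n a_n\nu_n^{1/2}=\langle f,S\rangle$, the domain of multiplication by $z$ is contained in $S^{\perp}$. When $\nu$ is finite this functional is bounded and $S\neq0$, so the domain lies in a proper closed hyperplane and is not dense; when $\nu$ is infinite the functional is unbounded and one restores the constraint $\langle f,S\rangle=0$ by finitely supported corrections of arbitrarily small $\ell^2$-norm (using $\sum_n\nu_n=\infty$), so the domain is dense. This gives (v)$\Leftrightarrow$(vi) and exhibits the one-dimensional defect of $M_z$ as $\CC S$.

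Next I would prove (v)$\Rightarrow$(iv) by taking $G=S$. Since $B/A=\sum_n\nu_n/(\cdot-t_n)+C$ with $C=\lim_{z\to\infty}B/A$, one has $S=B-CA$, a real linear combination of the interlacing functions $A$ and $B$, hence real on $\mathbb R$ with only real simple zeros; and $S\in\mathcal{HC}(\mu_s)\subset\mathcal{HC}(\mu)$ by finiteness. For $\lambda\in\mathcal Z_S$ one has $B(\lambda)=C\,A(\lambda)$, so the reproducing kernel collapses to $K_\lambda=\frac{B\,A(\lambda)-A\,B(\lambda)}{\pi(\cdot-\lambda)}=\frac{A(\lambda)}{\pi}\,\frac{S}{\cdot-\lambda}$; thus $S/(\cdot-\lambda)$ is a multiple of $K_\lambda$, and $\langle S,S/(\cdot-\lambda)\rangle\propto\langle S,K_\lambda\rangle=S(\lambda)=0$, which is (iv). For the converse (iv)$\Rightarrow$(ii), and simultaneously for the concluding assertion, I would set $\mathcal M=\overline{\Span}\{G/(\cdot-\lambda):\lambda\in\mathcal Z_G\}$; by regularity each $G/(\cdot-\lambda)\in\mathcal{HC}(\mu)$, the hypothesis gives $G\perp\mathcal M$ with $G\neq0$, whence $G\notin\mathcal M$. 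The configuration ``$G$ real with simple zeros, orthogonal to its own samples $G/(\cdot-\lambda)$'' is precisely that of a de Branges space with $A$-function a real multiple of $G$ and finite spectral measure, realized inside $\mathcal{HC}(\mu)$; so $\mathcal M$ and $\mathcal M\oplus\CC G$ are consecutive de Branges subspaces, both in $\mathrm{Chain}(\mu)$, differing by one dimension, which is (ii). Moreover $\mathrm{Type}(\mathcal M)=t(G)$ because the $A$-function of $\mathcal M$ is a multiple of $G$, and the type is unchanged across the indivisible interval bounded by $\mathcal M\oplus\CC G$ (where $\det H=0$), so $\mathrm{Type}(\mathcal M\oplus\CC G)=t(G)$, which is the final statement.

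Finally, (i)$\Leftrightarrow$(ii) is the definition of an indivisible interval together with de Branges' description of the chain: an $H$-indivisible interval is exactly a maximal locus on which the subspaces do not grow, across which the chain jumps by precisely one dimension. To close the cycle I would prove (ii)$\Rightarrow$(vi): if $\mathcal{HC}(\mu_a)\subsetneq\mathcal{HC}(\mu_s)$ is a one-dimensional gap, then the domain of $M_z$ on $\mathcal{HC}(\mu_s)$ consists of functions inherited from the strictly smaller subspaces, so $\overline{\operatorname{dom} M_z}\subseteq\overline{\bigcup_{t<s}\mathcal{HC}(\mu_t)}=\mathcal{HC}(\mu_a)$, a proper closed subspace, and the domain is not dense. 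Together with the above this yields the full cycle (i)$\Leftrightarrow$(ii)$\Rightarrow$(vi)$\Leftrightarrow$(v)$\Leftrightarrow$(iii) and (v)$\Rightarrow$(iv)$\Rightarrow$(ii), hence the equivalence of all six conditions. I expect the main obstacle to be exactly this structural bridge between the chain and the single-space conditions — identifying the one-dimensional gap with the defect $\CC S$ of the multiplication operator (equivalently, showing that $\operatorname{dom} M_z$ in $\mathcal{HC}(\mu_s)$ is the closure of the union of the strictly smaller subspaces, and that the span $\mathcal M\oplus\CC G$ in (iv) is a genuine consecutive pair in the chain). This needs de Branges' analysis of the orthogonal complement of a subspace in the chain and of the transfer matrix across an indivisible interval; the remaining implications are either direct Hilbert-space computations or reproducing-kernel identities.
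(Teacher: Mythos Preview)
Your explicit computations for the single-subspace conditions are correct and go further than the paper: the equivalence (iii)$\Leftrightarrow$(v) via $\{\nu_n^{1/2}\}\in\ell^2$ matches the paper exactly, your direct treatment of (v)$\Leftrightarrow$(vi) (computing that $\operatorname{dom}M_z\subset S^\perp$ and restoring the constraint by finitely supported perturbations when $\sum\nu_n=\infty$) replaces the paper's bare citation of de~Branges' Theorem~29, and your reproducing-kernel argument for (v)$\Rightarrow$(iv) unpacks what the paper compresses into one line for (iii)$\Rightarrow$(iv). For (iv)$\Rightarrow$(ii) both you and the paper rely on the fact that $\mathcal M=\overline{\Span}\{G/(\cdot-\lambda)\}$ is a de~Branges subspace of $\mathcal{HC}(\mu)$, which the paper simply declares ``evident''.

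Where your plan actually breaks is the bridge (ii)$\Rightarrow$(vi). The chain-of-subspaces argument you give does not work: the indivisible interval is the half-open $(a,s]$, on which $\mathcal{HC}(\mu_t)\equiv\mathcal{HC}(\mu_s)$, so $\bigcup_{t<s}\mathcal{HC}(\mu_t)$ already contains $\mathcal{HC}(\mu_s)$ and your inclusion $\overline{\operatorname{dom}M_z}\subseteq\overline{\bigcup_{t<s}\mathcal{HC}(\mu_t)}$ is vacuous rather than proper. Even reading ``strictly smaller subspaces'' set-theoretically, the real content of $\operatorname{dom}M_z\subseteq\mathcal{HC}(\mu_a)$ is that the one-dimensional complement $\mathcal{HC}(\mu_s)\ominus\mathcal{HC}(\mu_a)$ is spanned by the very function $S=A_s\sum\nu_n/(\cdot-t_n)$ of your (vi)-computation; this identification is precisely de~Branges' Theorem~29, not a consequence of the chain ordering alone. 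The paper closes the cycle by invoking that theorem for (ii)$\Rightarrow$(iii) and again for (iii)$\Leftrightarrow$(vi). You correctly flag this ``structural bridge'' as the expected obstacle in your last paragraph, but the specific mechanism you propose --- inheriting $\operatorname{dom}M_z$ from the subspaces below in the chain --- is not the right one; you should invoke Theorem~29 directly, as the paper does.
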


\begin{proof}
For the implication (i)$\implies$(ii) see \cite[Section 4.3]{bo3}. The implication (ii)$\implies$(i) is evident, see \cite[Problem 86]{bo1}. By 
\cite[Theorem 29]{bo1}, we obtain the implication (ii)$\implies$(iii). Next, taking $G=A\sum_{n\in\mathcal N}\nu_n/(\cdot-t_n)$, and using that $G\perp G/(\cdot-\lambda)$, $\lambda\in\mathcal{Z}_G$, we obtain the implication (iii)$\implies$(iv). The implication (iv)$\implies$(ii) is evident, because the closed space spanned by $G/(\cdot-\lambda)$, $\lambda\in\mathcal{Z}_G$, is a de Branges subspace of $\mathcal{HC}(\mu)$. The equivalence (iii)$\iff$(v) follows because (iii) means that the sequence $\{\nu_n^{1/2}\}_{n\in\mathcal N}$ is in $\ell^2$. The equivalence (iii)$\iff$(vi) follows from \cite[Theorem 29]{bo1}.

\end{proof}

The chain ${\rm Chain}(\mu)$ starts with an indivisible interval $(0,a]$ (or several contiguous indivisible intervals $(0,a_1], (a_1,a_2],\ldots,(a_{k-1}, a_k]$) if and only if $1\in\mathcal{HC}(\mu)$ or, correspondingly, $1,\ldots, z^{k-1}\in\mathcal{HC}(\mu)$) if and only if
$$
\sum_{n\in\mathcal N}\frac{1}{\mu_nA'(t_n)^2}<\infty
$$
or, correspondingly,
$$
\sum_{n\in\mathcal N}\frac{t^{2(k-1)}_n}{\mu_nA'(t_n)^2}<\infty.
$$
The chain ends with $k$ contiguous indivisible intervals $(a_1,a_2], (a_2,a_3],\ldots,(a_{k}, L]$ if and only if
$$
\sum_{n\in\mathcal N}\mu_nt_n^{2(k-1)}<\infty.
$$

Furthermore, the chain ${\rm Chain}(\mu)$ contains $k$ contiguous indivisible intervals if and only if we can find an entire function 
$G$  real on the real line with simple real zeros
such that $G$ is orthogonal to $G\slash(\cdot-\lambda)$, $\lambda\in\mathcal{Z}_G$,  
and $z^{k-1}G\in\mathcal{HC}(\mu)$.  

In this article, we are mainly interested in indivisible intervals inside the chain.

The reproducing kernel $K_{t_n}$ of $\mathcal{HC}(\mu)$ at $t_n\in T$, 
$$
\langle F,K_{t_n}\rangle=F(t_n),\qquad  F\in \mathcal{HC}(\mu),
$$
is given by
$$
K_{t_n}(z)=\mu_nA'(t_n)\frac{A(z)}{z-t_n},\qquad n\in\mathcal{N}.
$$ 
Therefore,  
$$
\|K_{t_n}\|=\mu^{1\slash2}_n|A'(t_n)|,\qquad n\in\mathcal{N},
$$
and 
$$
\frac{K_{t_n}(z)}{\|K_{t_n}\|}=\frac{
\|K_{t_n}\|}{A'(t_n)}\frac{A(z)}{z-t_n},\qquad n\in\mathcal{N}.
$$

\begin{lemma}\label{le1} If the chain ${\rm Chain}(\mu)$ contains an indivisible   
interval, and $A$ is associated to $\mathcal{HC}(\mu)$, then there exist entire functions $S$ and $G$ real on the real line such that $G\in \mathcal{HC}(\mu)$, $S\in\ell^2(\mu)$, and 
$$
\frac{GS}{A}=\sum_{n\in\mathcal N}\frac{a^2_n}{\cdot-t_n},
$$
where $a_n=\mu^{-1\slash2}_nG(t_n)/A'(t_n)$.

In the opposite direction, if there exist two entire functions 
$S$ and $G$ real on the real line such that $\lim_{|y|\rightarrow\infty}y^{k-1}G(iy)\slash A(iy)=0$ and
\begin{align}S&\in\ell^2(\mu), \label{er1}\\
\frac{G}{A'}&\in\ell^2(1\slash\mu), \label{er2}\\
\frac{GS}{A}&=\sum_{n\in\mathcal N}\frac{c_n}{\cdot-t_n}\label{er3},
\end{align}
where $\{c_n\}_{n\in\mathcal N}\in\ell^1$, $\sum_{n\in\mathcal N} c_n\ne 0$, then 
the chain ${\rm Chain}(\mu)$ contains an indivisible  
interval corresponding to a subspace of exponential type coinciding with that of $G$.
\end{lemma}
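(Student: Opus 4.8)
The plan is to prove the two implications separately; the necessity is a direct computation with Lemma~\ref{le7}, while the converse carries the weight of the statement. \emph{Necessity.} If ${\rm Chain}(\mu)$ contains an indivisible interval, choose, by Lemma~\ref{le7}(iv), a function $G\in\mathcal{HC}(\mu)$, real on $\mathbb R$, with simple real zeros, such that $G\perp G/(\cdot-\lambda)$ for every $\lambda\in\mathcal Z_G$. Put $a_n=\mu_n^{-1/2}G(t_n)/A'(t_n)$. Under the canonical isometry $f\mapsto(\mu_n^{-1/2}f(t_n)/A'(t_n))_n$ of $\mathcal{HC}(\mu)$ onto $\ell^2$ we have $\{a_n\}\in\ell^2$, and, by regularity of $\mathcal{HC}(\mu)$, $G/(\cdot-\lambda)\in\mathcal{HC}(\mu)$ with image sequence $(a_n/(t_n-\lambda))_n$ whenever $\lambda\in\mathcal Z_G$; hence $\langle G,G/(\cdot-\lambda)\rangle=-\Phi(\lambda)$, where $\Phi(z)=\sum_n a_n^2/(z-t_n)$. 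Since $T=\supp\mu$ is locally finite and $\{a_n\}\in\ell^2$, the series defining $\Phi$ converges locally uniformly off $T$ (and $\Phi$ is regular at a point $t_n\in T$ for which $a_n=0$), and the orthogonality relations say exactly that $\Phi$ vanishes at every point of $\mathcal Z_G$. Now set $S:=A\Phi/G$. Then $S$ is entire: at a pole $t_n$ of $\Phi$ the simple zero of $A$ cancels it; at $\lambda\in\mathcal Z_G\setminus T$ the simple zero of $G$ is cancelled by $\Phi(\lambda)=0$; at $t_n\in\mathcal Z_G$ the simple zero of $G$ is cancelled by the simple zero of $A$. Clearly $S$ is real on $\mathbb R$, $GS/A=\Phi=\sum_n a_n^2/(\cdot-t_n)$, and $S(t_n)=\mu_n^{-1}G(t_n)/A'(t_n)$ for every $n$ (both sides vanishing when $G(t_n)=0$, since then $\Phi(t_n)=0$ as well). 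Hence $\sum_n\mu_n S(t_n)^2=\sum_n a_n^2<\infty$, that is $S\in\ell^2(\mu)$; this proves the first assertion.

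\emph{Sufficiency.} Let $S,G$ satisfy \eqref{er1}--\eqref{er3} with $\sum_n c_n\ne0$. By \eqref{er2}, the sequence $a_n:=\mu_n^{-1/2}G(t_n)/A'(t_n)$ lies in $\ell^2$, and a standard argument — using the hypothesis $G(iy)/A(iy)\to0$ (a consequence of $\lim_{|y|\to\infty}y^{k-1}G(iy)/A(iy)=0$), the fact that $GS/A=\sum_n c_n/(\cdot-t_n)$ is the Cauchy transform of a finite signed measure and hence of bounded type, and that $G/A$ and $\sum_n a_n\mu_n^{1/2}/(\cdot-t_n)$ share all poles and residues — shows that $G=A\sum_n a_n\mu_n^{1/2}/(\cdot-t_n)\in\mathcal{HC}(\mu)$, so that ${\rm Type}(G)\le{\rm Type}(A)$. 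Comparing residues in \eqref{er3}, $c_n=G(t_n)S(t_n)/A'(t_n)=a_n v_n$, where $v_n:=\mu_n^{1/2}S(t_n)$; by \eqref{er1}, $\{v_n\}\in\ell^2$, whence $\sum_n c_n=\langle(a_n),(v_n)\rangle_{\ell^2}\ne0$.

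Let $\mathcal{HC}(\mu_s)$ be the smallest de Branges space of ${\rm Chain}(\mu)$ containing $G$, namely the closed span of the divided differences of $G$; since all of these have exponential type at most ${\rm Type}(G)$, one has ${\rm Type}(\mathcal{HC}(\mu_s))={\rm Type}(G)$. It now suffices to prove that the measure $\mu_s$ is finite: by the criterion quoted after Lemma~\ref{le7} (a chain ${\rm Chain}(\nu)$ ends with an indivisible interval if and only if $\nu$ is a finite measure), applied to the sub-chain ${\rm Chain}(\mu_s)$, it would follow that ${\rm Chain}(\mu_s)$, and hence ${\rm Chain}(\mu)$, contains an indivisible interval corresponding to $\mathcal{HC}(\mu_s)$, whose exponential type is ${\rm Type}(\mathcal{HC}(\mu_s))={\rm Type}(G)$. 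To prove finiteness of $\mu_s$, write $\mathcal{HC}(\mu_s)=\mathcal{HC}(A_s,\mu_s)$ with $\mathcal Z_{A_s}=\supp\mu_s$; the plan is to descend the identity $GS/A=\sum_n c_n/(\cdot-t_n)$ from $\mathcal{HC}(\mu)$ to $\mathcal{HC}(A_s,\mu_s)$ — using that $G$ generates $\mathcal{HC}(A_s,\mu_s)$, that its reproducing kernels $A_s(\cdot)/(\cdot-\tau)$, $\tau\in\mathcal Z_{A_s}$, span the space, and the relation between $(A,\mu)$ and $(A_s,\mu_s)$ along the chain — so as to produce an element of $\mathcal{HC}(A_s,\mu_s)$ playing the role of the normalized sum of reproducing kernels (the analogue of the constant function); the hypothesis $\sum_n c_n\ne0$ is exactly what keeps this element from vanishing, and hence forces $\mu_s$ to be finite.

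The necessity is routine once Lemma~\ref{le7}(iv) is invoked. The main obstacle is the last step of the converse: making rigorous the descent of the partial-fraction identity to the minimal subspace $\mathcal{HC}(\mu_s)$ containing $G$ and extracting, from $\sum_n c_n\ne0$, that $\mu_s$ is a finite measure, all the while keeping the exponential type under control so that $\mathcal{HC}(\mu_s)$ has type exactly ${\rm Type}(G)$. It is here that the full strength of the growth assumption $\lim_{|y|\to\infty}y^{k-1}G(iy)/A(iy)=0$ should enter, bounding $G$ — hence the size of the minimal subspace containing it — from above.
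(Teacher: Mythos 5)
Your necessity argument is correct and is essentially the paper's: starting from Lemma~\ref{le7}(iv), the orthogonality relations say that $\Phi(z)=\sum_n a_n^2/(z-t_n)$ vanishes on $\mathcal Z_G$, and $S:=A\Phi/G$ is then entire, real on $\mathbb R$, satisfies $S(t_n)=a_n\mu_n^{-1/2}$ and hence lies in $\ell^2(\mu)$. No issues there.

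The sufficiency half, however, has a genuine gap, and it sits exactly where you say it does. After correctly establishing that $G\in\mathcal{HC}(A,\mu)$ (the paper cites \cite[Theorem 26]{bo1} for this, using \eqref{er2} and $G(iy)/A(iy)\to0$) and that the residues of \eqref{er3} give $c_n=a_nb_n$ with $a_n=\mu_n^{-1/2}G(t_n)/A'(t_n)\in\ell^2$ and $b_n=\mu_n^{1/2}S(t_n)\in\ell^2$, you propose to pass to the minimal de Branges subspace $\mathcal{HC}(\mu_s)$ containing $G$ and to prove that $\mu_s$ is a finite measure by ``descending'' the identity $GS/A=\sum_n c_n/(\cdot-t_n)$ to the pair $(A_s,\mu_s)$. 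That descent is never carried out, and it is not routine: the relation between $(A,\mu)$ and $(A_s,\mu_s)$ along the chain is not explicit enough to transport a partial-fraction identity, and nothing in your sketch extracts finiteness of $\mu_s$ from $\sum_n c_n\ne0$. The detour is also unnecessary. The paper's argument stays entirely in the ambient space: since $\{b_n\}\in\ell^2$ by \eqref{er1}, the element $H=\sum_n b_n\,\frac{K_{t_n}}{\|K_{t_n}\|}\cdot\frac{A'(t_n)}{|A'(t_n)|}$ belongs to $\mathcal{HC}(A,\mu)$, and one computes directly
$$
\Bigl\langle\frac{G}{\lambda-\cdot},H\Bigr\rangle=\sum_{n\in\mathcal N}\frac{a_nb_n}{\lambda-t_n}=\frac{GS}{A}(\lambda)=0,\qquad\lambda\in\mathcal Z_G,
\qquad\text{while}\qquad
\langle G,H\rangle=\sum_{n\in\mathcal N}c_n\ne0.
$$
Hence $G$ does not lie in the closed span of $\{G/(\cdot-\lambda)\}_{\lambda\in\mathcal Z_G}$, which is itself a de Branges subspace; this is exactly condition (ii) of Lemma~\ref{le7}, so the chain contains an indivisible interval, and the final assertion of Lemma~\ref{le7} (the space spanned by $G$ and its divided differences has the exponential type of $G$) gives the statement about the type. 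You have all the ingredients for this three-line finish — the sequence $b_n$, the residue identity $c_n=a_nb_n$, and the hypothesis $\sum_n c_n\ne0$ — but you route them into an unfinished construction instead of using them to exhibit the separating functional $H$ directly.
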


\begin{proof}
Let $G\in \mathcal{HC}(A,\mu)$ be an entire function real on the real line with simple real zeros, orthogonal to $G\slash(\cdot-\lambda)$, $\lambda\in\mathcal{Z}_G$. Then we have  
$$
G=A\sum_{n\in\mathcal N}\frac{a_n\mu^{1\slash2}_n}{\cdot-t_n}=\sum_{n\in\mathcal N}a_n\frac{K_{t_n}}{\|K_{t_n}\|}\cdot \frac{A'(t_n)}{|A'(t_n)|},
$$
with real coefficients $a_n$.
Then 
\begin{gather}
G\perp\frac{G}{\lambda-\cdot},\qquad \lambda\in\mathcal{Z}_G \qquad \Leftrightarrow\nonumber \\
\Bigl\langle \frac{G}{\lambda-\cdot}, \sum_{n\in\mathcal N} a_n\frac{K_{t_n}(z)}{\|K_{t_n}\|}\cdot \frac{A'(t_n)}{|A'(t_n)|}\Bigr\rangle=0,\qquad \lambda\in\mathcal{Z}_G
\quad\Leftrightarrow\nonumber \\
\sum_{n\in\mathcal N}\frac{a^2_n}{\lambda-t_n}=0,\qquad \lambda\in\mathcal{Z}_G,
\label{eq1}
\end{gather}
because
\begin{equation}
G(t_n)=a_nA'(t_n)\mu^{1\slash2}_n.
\label{eq2}
\end{equation}

Next, \eqref{eq1} is equivalent to the existence of an entire function $S$ real on the real line
such that
\begin{equation}
\sum_{n\in\mathcal N}\frac{a^2_n}{\cdot-t_n}=\frac{GS}{A}.
\label{eq5}
\end{equation}

Comparing the residues on $T$, we obtain that 
$$
a_n^2=\frac{G(t_n)S(t_n)}{A'(t_n)},\qquad n\in\mathcal N,
$$
and hence, 
\begin{equation}
S(t_n)=a_n\mu^{-1\slash2}_n,\qquad n\in\mathcal N.
\label{eq3}
\end{equation}

Finally, \eqref{eq2}--\eqref{eq3} yield \eqref{er1}--\eqref{er3} with $c_n\ge 0$, $0<\sum_{n\in\mathcal N}c_n<\infty$.

In the opposite direction, suppose that we can find two entire function 
$S$ and $G$ real on the real line such that $\lim_{|y|\rightarrow\infty}G(iy)\slash A(iy)=0$, and relations \eqref{er1}--\eqref{er3} hold 
with $\{c_n\}_{n\in\mathcal N}\in\ell^1$, $\sum_{n\in\mathcal N} c_n\ne 0$. 

Set $b_n=S(t_n)\mu^{1\slash2}_n$, $n\in\mathcal{N}$, and consider
$$
H= \sum_{n\in\mathcal N}b_n\frac{K_{t_n}}{\|K_{t_n}\|}\cdot \frac{A'(t_n)}{|A'(t_n)|} \in\mathcal{HC}(A,\mu). 
$$
Since $G\slash A'\in\ell^2(1\slash\mu)$ and $\lim_{|y|\rightarrow\infty}G(iy)\slash A(iy)=0$, a result from the de Branges theory \cite[Theorem 26]{bo1} 
yields that $G\in\mathcal{HC}(A,\mu)$. 

Set
$$
a_n=\frac{G(t_n)}{A'(t_n)\mu^{1\slash2}_n},\qquad n\in\mathcal N. 
$$
Then $a_nb_n=c_n$, $n\in\mathcal N$, 
$$
\langle G, H\rangle= \sum_{n\in\mathcal N}a_nb_n=\sum_{n\in\mathcal N}c_n\ne 0,
$$ 
and
\begin{multline*}
\Bigl\langle\frac{G}{\lambda-\cdot}, H\Bigr\rangle=\sum_{n\in\mathcal N}\frac{G(t_n)b_n}{(\lambda-t_n)\|K_{t_n}\|}\cdot \frac{A'(t_n)}{|A'(t_n)|}\\ =
\sum_{n\in\mathcal N}\frac{a_nb_n}{\lambda-t_n}=\frac{GS}{A}(\lambda)=0,\qquad \lambda\in\mathcal{Z}_G.
\end{multline*}
Thus $G\not\in\Span\{G\slash(\cdot-\lambda)\}_{\lambda\in\mathcal{Z}_G}$, and, by Lemma~\ref{le7}, we get an indivisible interval.
\end{proof}

If $S$ and $G$ in the formulation of Lemma~\ref{le1} are not polynomials, then the indivisible interval we obtain is inside the chain.

\begin{lemma}\label{le2} If the chain ${\rm Chain}(\mu)$ contains 
$k$ contiguous indivisible intervals, and $A$ is associated to $\mathcal{HC}(\mu)$, then there exist entire functions $S$ and $G$ real on the real line such that
$z^{k-1}G\in \mathcal{HC}(A,\mu)$, 
\begin{align*}S&\in\ell^2(\mu), \\
\frac{GS}{A}&=\sum_{n\in\mathcal N}\frac{a^2_n}{\cdot-t_n},
\end{align*}
where $a_n=\mu^{-1\slash2}_nG(t_n)/A'(t_n)$.

In the opposite direction, if there exist two entire functions 
$S$ and $G$ real on the real line such that $\lim_{|y|\rightarrow\infty}y^{k-1}G(iy)\slash A(iy)=0$ and
\begin{align*}
S&\in\ell^2(\mu), \\
\frac{z^{k-1}G}{A'}&\in\ell^2(1\slash\mu), \\
\frac{GS}{A}&=\sum_{n\in\mathcal N}\frac{c_n}{\cdot-t_n},
\end{align*}
where $\{c_n\}_{n\in\mathcal N}\in\ell^1$, $\sum_{n\in\mathcal N} c_n\neq0$, then 
the chain ${\rm Chain}(\mu)$ contains $k$ contiguous indivisible intervals.
\end{lemma}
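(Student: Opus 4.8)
The plan is to run through the proof of Lemma~\ref{le1}, carrying an extra factor $z^{k-1}$ at the appropriate places, and replacing the equivalence (i)$\Leftrightarrow$(iv) of Lemma~\ref{le7} by the criterion for $k$ contiguous indivisible intervals recorded right after Lemma~\ref{le7}.

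For the forward implication I would start from that criterion: $k$ contiguous indivisible intervals provide an entire $G$, real on $\mathbb R$ with simple real zeros, with $G\perp G/(\cdot-\lambda)$ for $\lambda\in\mathcal Z_G$ and $z^{k-1}G\in\mathcal{HC}(\mu)$. Since $\mathcal{HC}(\mu)$ is regular, applying the difference quotient at $0$ to $z^{k-1}G$ repeatedly produces $z^{k-2}G,\dots,zG,G\in\mathcal{HC}(\mu)$, so $G=A\sum_n a_n\mu_n^{1/2}/(\cdot-t_n)$ with $\{a_n\}\in\ell^2$ and $a_n=\mu_n^{-1/2}G(t_n)/A'(t_n)$. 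From here the computation is verbatim that of Lemma~\ref{le1}: the relation $G\perp G/(\cdot-\lambda)$ on $\mathcal Z_G$ is equivalent to $\sum_n a_n^2/(\lambda-t_n)=0$ there, hence to the existence of an entire $S$ real on $\mathbb R$ with $GS/A=\sum_n a_n^2/(\cdot-t_n)$, and comparing residues on $T$ gives $S(t_n)=a_n\mu_n^{-1/2}$, whence $S\in\ell^2(\mu)$. I do not expect any difficulty in this direction: it is Lemma~\ref{le1} plus the regularity observation.

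For the converse I would imitate the second half of the proof of Lemma~\ref{le1}. Set $b_n=S(t_n)\mu_n^{1/2}$, so $\{b_n\}\in\ell^2$ by $S\in\ell^2(\mu)$, and $H=\sum_n b_n\,K_{t_n}\|K_{t_n}\|^{-1}A'(t_n)|A'(t_n)|^{-1}\in\mathcal{HC}(\mu)$. From $z^{k-1}G/A'\in\ell^2(1/\mu)$ and $\lim_{|y|\to\infty}y^{k-1}G(iy)/A(iy)=0$, the result \cite[Theorem 26]{bo1} gives $z^{k-1}G\in\mathcal{HC}(\mu)$, hence, by the regularity step above, also $G,zG,\dots,z^{k-2}G\in\mathcal{HC}(\mu)$. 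With $a_n=G(t_n)/(A'(t_n)\mu_n^{1/2})$, comparing residues in $GS/A=\sum_n c_n/(\cdot-t_n)$ yields $a_nb_n=c_n$, so that $\langle G,H\rangle=\sum_n c_n\ne0$ while $\langle G/(\lambda-\cdot),H\rangle=(GS/A)(\lambda)=0$ for $\lambda\in\mathcal Z_G$. Thus, setting $\mathcal H_0=\overline{\Span}\{G/(\cdot-\lambda):\lambda\in\mathcal Z_G\}$, which is a de Branges subspace of $\mathcal{HC}(\mu)$, we get $H\perp\mathcal H_0$ and $G\notin\mathcal H_0$. To upgrade this single indivisible interval to $k$ contiguous ones I would introduce
$$
\mathcal H_j=\overline{\Span}\bigl(\mathcal H_0\cup\{G,zG,\dots,z^{j-1}G\}\bigr),\qquad j=0,1,\dots,k,
$$
check that each $\mathcal H_j$ is a de Branges subspace of $\mathcal{HC}(\mu)$ — it is closed, conjugation invariant, and invariant under the difference quotient operators, the last point using the standard fact that the difference quotients $(G(z)-G(w))/(z-w)$ belong to $\mathcal H_0$ since $G$ is the function generating $\mathcal H_0$ in this way — and that $z^{j-1}G\notin\mathcal H_{j-1}$ for $j=1,\dots,k$, since otherwise $pG\in\mathcal H_0$ for some polynomial $p$ of degree $j-1\ge1$, and removing the roots of $p$ one at a time via the difference quotient would force $G\in\mathcal H_0$. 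Then $\mathcal H_0\subsetneq\mathcal H_1\subsetneq\dots\subsetneq\mathcal H_k$ are consecutive members of $\mathrm{Chain}(\mu)$ with one-dimensional quotients, so by Lemma~\ref{le7} they delimit $k$ contiguous indivisible intervals.

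The residue bookkeeping and the two inner-product evaluations copy over from Lemma~\ref{le1}, and the regularity argument extracting $G,\dots,z^{k-2}G$ from $z^{k-1}G$ is routine. The step I expect to be the genuine obstacle is the last one in the converse: verifying that the tower $\mathcal H_0\subset\dots\subset\mathcal H_k$ really consists of de Branges subspaces with one-dimensional consecutive quotients, so that it corresponds to exactly $k$ \emph{contiguous} indivisible intervals and not to fewer, or to a single longer one. The difference quotient invariance of the $\mathcal H_j$ together with the polynomial-peeling argument showing $z^{j-1}G\notin\mathcal H_{j-1}$ are what carry this through.
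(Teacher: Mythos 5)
Your proof is correct and takes exactly the route the paper intends: the paper's entire proof of this lemma is the sentence ``The proof is analogous to that of Lemma~\ref{le1},'' and your argument is that analogy carried out, using the criterion for $k$ contiguous indivisible intervals stated after Lemma~\ref{le7} in the forward direction and the $z^{k-1}$-weighted version of the converse of Lemma~\ref{le1} in the other. Your tower $\mathcal H_0\subsetneq\mathcal H_1\subsetneq\dots\subsetneq\mathcal H_k$, with the difference-quotient invariance check and the polynomial-peeling argument showing $z^{j-1}G\notin\mathcal H_{j-1}$, correctly supplies the detail the paper leaves implicit.
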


The proof is analogous to that of Lemma~\ref{le1}. 

Again, if $S$ and $G$ are not polynomials, then the contiguous indivisible intervals we obtain are inside the chain.

\section{The spectrum $\mathbb{Z}$ and its perturbations}
\label{se3}

Here, we start with the case when the spectrum $T$ of the de Branges space is $\mathbb Z$, and, correspondingly, $A(z)=A_0(z)=\sin(\pi z)$.

\subsection{Indivisible interval inside the chain; proof of Theorem~\ref{onetwoint}}


\begin{proof}[Proof of Theorem~\ref{onetwoint}]
By Lemma~\ref{le2} we know that the existence of $k$ contiguous intervals is equivalent to the existence of two non-zero entire functions $S$ and $G$
real on the real line and such that $z^{k-1}G\in\mathcal{HC}(A_0,\mu)$,
\begin{equation}
\begin{cases}S\in\ell^2(\mu), \\
z^{k-1}G\in\ell^2(1\slash\mu), \\
\dfrac{GS}{A_0}=\sum\limits_{n\in\mathbb Z}\dfrac{c_n}{\cdot-n},
\end{cases}
\label{eq7}
\end{equation}
$\{c_n\}_{n\in\mathbb Z}\in\ell^1$. Additionally, we could impose the restriction $c_n\ge 0$, $n\in\mathbb{Z}$.

Now for $k=1$, choose entire functions $G$ and $S$ real on the real line such that $G(z)S(z)=z^{-1}A_0(z)$,
$|G(x)|\asymp\dist(x,\mathcal{Z}_G)(1+|x|)^{-1\slash2}$, $|S(x)|\asymp\dist(x,\mathcal{Z}_S)(1+|x|)^{-1\slash2}$. 
(For example, we can take $G(z)=\prod_{n\geq1}\bigl{(}1-\frac{z}{2n-1}\bigr{)}\bigl{(}1+\frac{z}{2n}\bigr{)}$). Then $\lim_{|y|\to\infty}G(iy)/A_0(iy)=0$. 

Set 
$$
\mu_n=\begin{cases}|n|^{-1\slash2},\quad n\in\mathcal{Z}_G, \\
|n|^{1\slash2}, \quad n\in\mathcal{Z}_S, 
\end{cases}
$$
and $\mu_0=1$. The measure $\mu=\sum_{n\in\mathbb Z}\mu_n\delta_n$ satisfies conditions \eqref{mu1} and \eqref{mu2}. 
Furthermore, conditions \eqref{eq7} are satisfied and the space $\mathcal{HC}(\mu)$ contains an indivisible interval inside the chain.

In the opposite direction, suppose that there are two contiguous indivisible intervals. Conditions \eqref{mu1}, \eqref{mu2}
and \eqref{eq7} imply that
\begin{align*}
&\sum_{n\in\mathbb Z}(S^2(n)+(1+|n|)^{-2})\mu_n<\infty,\\
&\sum_{n\in\mathbb Z}(n^2G^2(n)+(1+|n|)^{-2})\mu^{-1}_n<\infty.
\end{align*}

Therefore,
$$
\sum_{n\in\mathbb Z}(|S(n)|+(1+|n|)^{-1})(n|G(n)|+(1+|n|)^{-1})<\infty.
$$
By the Cartwright theorem \cite{bbook}, $S$ and $G$ have strictly positive exponential types. Since $t(S)+t(G)\le \pi$, these exponential types are smaller than $\pi$.

Since $G\in\ell^1(\mathbb{Z})$, we conclude that $G\in L^1(\mathbb{R})$ \cite[Section 10.6]{bbook}. In a similar way, since $S(n)(1+|n|)^{-1}\in\ell^1(\mathbb{Z})$, we have $S(x)(1+|x|)^{-1}\in L^1(\mathbb{R})$ \cite[Theorem~3a]{Agmon}.
Next we use that
$$
\frac{GS}{A_0}=\sum_{n\in\mathbb{Z}}\frac{a^2_n}{\cdot-n},\qquad c_n\ge 0,\, n\in\mathbb{Z},\qquad \{c_n\}_{n\in\mathbb{Z}}\in\ell^1.
$$
Using a version of Boole's lemma by Khrushchev--Vinogradov \cite{KV}, we obtain that 
$|G(x)S(x)|\asymp |x|^{-1}$ on a set $E\subset\mathbb{R}$ of infinite logarithmic length.
As a result,
\begin{multline*}
\infty=\biggl{(}\int_E\frac{dt}{1+|t|}\biggr{)}^2\leq \int_E|G(t)|\,dt\int_E\frac{|S(t)|\,dt}{1+|t|}\\ \le
\int_{\mathbb{R}}|G(t)|\,dt\int_{\mathbb{R}}\frac{|S(t)|\,dt}{1+|t|}<\infty.
\end{multline*}
This contradiction shows that no de Branges space with spectrum $\mathbb{Z}$ possesses
two contiguous indivisible intervals.
\end{proof}


Next, we consider some situations where the spectrum of our de Branges space is a perturbation of $\mathbb Z$.

\begin{proposition}
Suppose that for some $\gamma\in\mathbb{R}$ and $T\subset\mathbb R$ we have
$$
|A(z)|\asymp\min(1,\dist(z, T))(1+|z|)^{\gamma}\exp(\pi |\Im z|),\qquad z \in\mathbb C.
$$
If a de Branges space $\mathcal{HC}(A,\mu)$ is regular, then it can contain
$k$ contiguous indivisible intervals if and only if $k<2+\gamma$. 
\end{proposition}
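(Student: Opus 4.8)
The plan is to adapt the proof of Theorem~\ref{onetwoint}, carrying the polynomial factor $(1+|z|)^\gamma$ through every estimate. First observe that the hypothesis on $|A|$ gives $t(A)=\pi$, that $T=\mathcal Z_A$ is a separated, relatively dense perturbation of a lattice, and that $|A'(t_n)|\asymp(1+|t_n|)^\gamma$ for $t_n\in T$. Moreover, a regular $\mathcal{HC}(A,\mu)$ satisfies \eqref{mu1} and \eqref{mu2}, and pairing $(1+|t_n|)^{-1}\in\ell^2(\mu)$ with $(1+|t_n|)^{-1}/A'(t_n)\asymp(1+|t_n|)^{-1-\gamma}\in\ell^2(1/\mu)$ by Cauchy--Schwarz yields $\sum_n(1+|t_n|)^{-2-\gamma}<\infty$, so we may assume $\gamma>-1$. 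By Lemma~\ref{le2} it is enough to decide for which integers $k$ there are non-polynomial entire functions $G,S$, real on $\mathbb R$, with $z^{k-1}G/A'\in\ell^2(1/\mu)$, $\lim_{|y|\to\infty}y^{k-1}G(iy)/A(iy)=0$, $S\in\ell^2(\mu)$, and $GS/A=\sum_n c_n/(\cdot-t_n)$ with $\{c_n\}\in\ell^1$, $\sum_n c_n\ne0$ (and $c_n\ge0$, $0<\sum_n c_n<\infty$ in the direct implication).

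For $k<2+\gamma\Rightarrow$ existence, fix $t_0\in T$ and look for $G,S$ with $GS=A/(\cdot-t_0)$ (possibly with a more general finite Cauchy transform in place of $1/(\cdot-t_0)$), so that $GS/A$ has a nonzero total mass and $t(G)+t(S)=t(A)=\pi$. Choose a partition $T\setminus\{t_0\}=\mathcal Z_G\sqcup\mathcal Z_S$ into separated, relatively dense sets whose canonical products satisfy $|G(x)|\asymp\dist(x,\mathcal Z_G)(1+|x|)^\alpha$, $|S(x)|\asymp\dist(x,\mathcal Z_S)(1+|x|)^\beta$, $\alpha+\beta=\gamma-1$ (the product is then $\asymp|A(x)/(x-t_0)|$ since $\dist(x,\mathcal Z_G)\dist(x,\mathcal Z_S)\asymp\dist(x,T)$). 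The exponents $\alpha,\beta$ are not rigid: redistributing the ``excess'' of $T$ over the underlying lattice between $\mathcal Z_G$ and $\mathcal Z_S$ realizes any $\alpha$ with $-1<\alpha<1+\gamma-k$, and such $\alpha$ exists exactly when $k<2+\gamma$. Having fixed such $G,S$, put $\mu_n=(1+|t_n|)^{p}$ for $t_n\in\mathcal Z_G$, $\mu_n=(1+|t_n|)^{q}$ for $t_n\in\mathcal Z_S$, and any positive value at $t_0$. Since $G$ vanishes on $\mathcal Z_G$ and $S$ on $\mathcal Z_S$, the conditions $S\in\ell^2(\mu)$, $z^{k-1}G/A'\in\ell^2(1/\mu)$, \eqref{mu1}, \eqref{mu2} reduce to
\[
-2\gamma-1<p<\min(1,\,-1-2\beta),\qquad \max\bigl(2k-1+2\alpha-2\gamma,\,-2\gamma-1\bigr)<q<1,
\]
and these intervals are non-empty precisely because $\alpha>-1$ and $k<1+\gamma-\alpha$; the remaining requirements ($z^{k-1}G$ in the space, the limit at $i\infty$) follow from $t(G),t(S)\in(0,\pi)$. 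As $G,S$ are not polynomials, the $k$ contiguous indivisible intervals so obtained lie inside the chain, and $\mathcal{HC}(A,\mu)$ is regular.

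For the converse, suppose the chain contains $k$ contiguous indivisible intervals; we show $k<2+\gamma$. By Lemma~\ref{le2} choose $G,S$ as above with $GS/A=\sum_n a_n^2/(\cdot-t_n)$, $0<\sum_n a_n^2<\infty$. Pairing $S$ and $(1+|t_n|)^{-1}$ in $\ell^2(\mu)$ against $z^{k-1}G/A'$ and $(1+|t_n|)^{-1}/A'$ in $\ell^2(1/\mu)$ and using $|A'(t_n)|\asymp(1+|t_n|)^\gamma$, Cauchy--Schwarz gives
\[
\sum_n|G(t_n)|(1+|t_n|)^{k-2-\gamma}<\infty,\qquad \sum_n|S(t_n)|(1+|t_n|)^{-1-\gamma}<\infty.
\]
By the Cartwright theorem $G$ and $S$ have positive exponential types, hence types in $(0,\pi)$ since $t(G)+t(S)\le\pi$; then the Plancherel--P\'olya and Agmon-type sampling theorems for the Cartwright class (\cite{bbook},\cite{Agmon}), exactly as in Theorem~\ref{onetwoint}, upgrade these sums to $\int_{\mathbb R}|G(x)|(1+|x|)^{k-2-\gamma}\,dx<\infty$ and $\int_{\mathbb R}|S(x)|(1+|x|)^{-1-\gamma}\,dx<\infty$. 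Finally, the Boole--Khrushchev--Vinogradov estimate \cite{KV}, applied to the Cauchy transform $GS/A=\sum_n a_n^2/(\cdot-t_n)$, furnishes a set $E\subset\mathbb R$ of infinite logarithmic length, bounded away from $T$, with $|GS(x)|\gtrsim(1+|x|)^{\gamma-1}$ on $E$. If $k\ge2+\gamma$, then on $E$ one has $(1+|x|)^{-1}\lesssim|G(x)|^{1/2}(1+|x|)^{(k-2-\gamma)/2}\cdot|S(x)|^{1/2}(1+|x|)^{-(1+\gamma)/2}$ (the ``defect'' exponent being $\tfrac12(2+\gamma-k)\le0$), whence by Cauchy--Schwarz
\[
\infty=\int_E\frac{dx}{1+|x|}\lesssim\Bigl(\int_{\mathbb R}|G(x)|(1+|x|)^{k-2-\gamma}\,dx\Bigr)^{1/2}\Bigl(\int_{\mathbb R}|S(x)|(1+|x|)^{-1-\gamma}\,dx\Bigr)^{1/2}<\infty,
\]
a contradiction. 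Hence $k<2+\gamma$.

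The main obstacle is the construction in the first implication: one must produce entire functions $G,S$ realizing the full admissible range of the growth exponent $\alpha$, i.e. understand how the excess density of $T$ over a lattice can be split between the two factors; everything else is a weighted rerun of the proof of Theorem~\ref{onetwoint} together with the standard Cartwright-class sampling theorems and the Boole--Khrushchev--Vinogradov lemma.
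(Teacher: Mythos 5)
Your proof is correct and follows essentially the same route as the paper: the same reduction via Lemma~\ref{le2}, the same construction of $G$, $S$ as canonical products with prescribed power asymptotics over a splitting of $T$, and the same power-weight measure with parameter windows that match the paper's $\alpha,\beta,\delta$ exactly (up to the sign convention on the exponent of $G$). For the converse the paper only says ``argue by analogy with Theorem~\ref{onetwoint}''; your weighted rerun of that argument, with the exponent bookkeeping and the Khrushchev--Vinogradov step carried through, is the intended one and checks out.
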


In the proof, we use a possibility to factorize such entire functions $A$ into factors of precise asymptotics. For a similar arguments, see 
Lemmas~\ref{dfs1} and \ref{dfs2} below.

\begin{proof} 
Let $1\le k<2+\gamma$. Choose $\alpha\in(k-\gamma-1,1)$. Then choose $\beta\in(-1-2\gamma,\min(1,1-2\alpha-2\gamma))$, $\delta\in(\max(-1-2\gamma,2k-2\alpha-2\gamma-1),1)$ 
and define entire functions $G$ and $S$ real on the real line such that $GS=A/(\cdot-\lambda)$ for some $\lambda\in T$,
$|G(t)|\asymp \dist(t,\mathcal{Z}_G)(1+|t|)^{-\alpha}$, $|S(t)|\asymp \dist(t,\mathcal{Z}_S)(1+|t|)^{\gamma-1+\alpha}$, $t\in\mathbb R$, and a measure 
$\mu=\sum_{t\in T}\mu_t\delta_t$ with $\mu_t=(1+|t|)^{\beta}$, $t\in \mathcal{Z}_G$, $\mu_t=(1+|t|)^{\delta}$, $t\in \mathcal{Z}_S$, $\mu_\lambda=1$.  
It remains to apply Lemma~\ref{le2}. 
In the opposite direction, we argue by analogy with the proof of Theorem~\ref{onetwoint}.
\end{proof}

\begin{proposition} Given $\beta>0$, set $T=\cup_{n\in\mathbb Z}\{n,n+(2+|n|)^{-\beta}\}$.
If the space $\mathcal{HC}(T,\mu)$ is regular, then $\beta<1$, and the corresponding 
de Branges subspaces chain ${\rm Chain}(\mu)$ can contain an indivisible interval 
and cannot contain two contiguous indivisible intervals. 
\end{proposition}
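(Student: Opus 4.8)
The plan is to deduce all three assertions from the precise asymptotics of the real Cartwright-class function $A$ with $\mathcal Z_A=T$. Write $d_n=(2+|n|)^{-\beta}$ and let $n(t)$ be the integer of the cluster $\{n,n+d_n\}\subset T$ to which $t$ belongs. Since $\sum_nd_n/(z-n)$ and $\sum_nd_n^2/(z-n)^2$ converge, the function $A$ (which regularity forces into the Cartwright class, hence to be unique up to a real constant) factors as $A(z)=\sin^2(\pi z)\prod_{n\in\mathbb Z}\bigl(1-\tfrac{d_n}{z-n}\bigr)$; arguing as in Lemmas~\ref{dfs1}--\ref{dfs2} (the product tends to $1$ at $i\infty$ and along $\mathbb R$ away from the clusters) one obtains $t(A)=2\pi$, $|A(x)|\lesssim1$ on $\mathbb R$ with $|A(x)|\asymp1$ for $\dist(x,T)\ge\tfrac14$, and $|A'(t)|\asymp d_{n(t)}\asymp(1+|t|)^{-\beta}$ for all $t\in T$. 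I use these facts freely.

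That $\beta<1$ is then immediate. Regularity provides \eqref{mu1} and \eqref{mu2}, and by $|A'(t)|^{-2}\asymp(1+|t|)^{2\beta}$ condition \eqref{mu2} reads $\sum_{t\in T}(1+|t|)^{2\beta}\bigl(\mu_t(1+t^2)\bigr)^{-1}<\infty$. Multiplying the summands of \eqref{mu1} and \eqref{mu2} and using $\sqrt{ab}\le\tfrac12(a+b)$ gives $\sum_{t\in T}(1+|t|)^{\beta}(1+t^2)^{-1}<\infty$; as $T$ contains two points per unit cell, this is $\sum_n(1+|n|)^{\beta-2}<\infty$, forcing $\beta<1$.

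For the existence of one indivisible interval (for each $\beta\in(0,1)$) I would follow the proofs of Theorem~\ref{onetwoint} and of the preceding Proposition: pick $\lambda\in T$, split $T\setminus\{\lambda\}=\mathcal Z_G\sqcup\mathcal Z_S$, use the factorization lemmas to build real entire $G,S$ of precise asymptotic behaviour with $GS=cA/(\,\cdot-\lambda)$ and $\lim_{|y|\to\infty}G(iy)/A(iy)=0$, and prescribe the masses, say $\mu_t=|G(t)|(1+|t|)^{1+\beta}$ for $t\in\mathcal Z_S$, $\mu_t=(1+|t|)^{\beta-1}/|S(t)|$ for $t\in\mathcal Z_G$, $\mu_\lambda=1$. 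A Cauchy--Schwarz then reduces \eqref{mu1}, \eqref{mu2}, \eqref{er1}--\eqref{er3} to the two conditions $\sum_{t\in\mathcal Z_S}|G(t)|(1+|t|)^{\beta-1}<\infty$ and $\sum_{t\in\mathcal Z_G}|S(t)|(1+|t|)^{\beta-1}<\infty$, after which Lemma~\ref{le1} yields an indivisible interval, lying inside the chain since $G,S$ are not polynomials. I expect the genuinely delicate point here to be the choice of the splitting and of the asymptotics of $G,S$ making both summability conditions hold throughout $0<\beta<1$.

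The last assertion is proved by contradiction, in the spirit of the converse part of Theorem~\ref{onetwoint}. Suppose two contiguous indivisible intervals exist. By Lemma~\ref{le2} with $k=2$ there are real entire $S,G$ with $zG\in\mathcal{HC}(A,\mu)$, $S\in\ell^2(\mu)$, $zG/A'\in\ell^2(1/\mu)$, and $GS/A=\phi:=\sum_nc_n/(\,\cdot-t_n)$, $\{c_n\}\in\ell^1$, $c_n\ge0$, $\sum_nc_n\ne0$. Putting $u_n=t_nG(t_n)(A'(t_n)\mu_n^{1/2})^{-1}\in\ell^2$ and $v_n=S(t_n)\mu_n^{1/2}\in\ell^2$ we have $u_nv_n=t_nc_n$, hence $\sum_n|t_n|c_n<\infty$, so $x\phi(x)=\sum_nc_n+\sum_nc_nt_n/(x-t_n)\to\sum_nc_n\ne0$ and therefore $|GS(x)|=|A(x)|\,|\phi(x)|\asymp(1+|x|)^{-1}$ on $E:=\{x:\dist(x,T)\ge\tfrac14,\ |x|\ge R\}$, a set of infinite logarithmic length. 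On the other hand, combining $S\in\ell^2(\mu)$ and $zG/A'\in\ell^2(1/\mu)$ with \eqref{mu1}, \eqref{mu2} and $|A'(t)|\asymp(1+|t|)^{-\beta}$, a Cauchy--Schwarz gives $\sum_{t\in T}(1+|t|)^{\beta}|G(t)|<\infty$ and $\sum_{t\in T}(1+|t|)^{\beta-1}|S(t)|<\infty$; since $G,S$ are Cartwright with $t(G),t(S)>0$ (as in Theorem~\ref{onetwoint}) and $t(G)+t(S)=t(GS)=t(A)=2\pi$, both types lie in $(0,2\pi)$. The new feature relative to $T=\mathbb Z$ is that $G$ may have type in $[\pi,2\pi)$; here one uses the \emph{two} points $n,n+d_n$ of each cluster, together with $1/d_n=(2+|n|)^{\beta}$, to pass via divided differences from the weighted bounds to $\sum_n(|G(n)|+|G'(n)|)<\infty$ and $\sum_n(|S(n)|+|S'(n)|)(1+|n|)^{-1}<\infty$, whence Hermite/Agmon-type sampling for type $<2\pi$ yields $G\in L^1(\mathbb R)$ and $S(x)(1+|x|)^{-1}\in L^1(\mathbb R)$. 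Then
$$
\infty=\Bigl(\int_E\frac{dx}{1+|x|}\Bigr)^{2}\lesssim\int_E|G(x)|\,dx\cdot\int_E\frac{|S(x)|}{1+|x|}\,dx\le\int_{\mathbb R}|G(x)|\,dx\cdot\int_{\mathbb R}\frac{|S(x)|}{1+|x|}\,dx<\infty,
$$
a contradiction. I anticipate the main obstacles to be the asymptotic analysis of $A$ (underpinning both directions) and this final passage from weighted $\ell^1$-bounds on $T$ to $L^1(\mathbb R)$-bounds, which was automatic for $T=\mathbb Z$ but here leans on the cluster structure.
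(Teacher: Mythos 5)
Your overall strategy coincides with the paper's: the same AM--GM argument combining \eqref{mu1} and \eqref{mu2} with $|A'(t)|\asymp(1+|t|)^{-\beta}$ gives $\beta<1$, and the non-existence of two contiguous indivisible intervals is obtained, exactly as in the paper, by extracting $\sum_{t\in T}|G(t)|(1+|t|)^{\beta}<\infty$ and $\sum_{t\in T}|S(t)|(1+|t|)^{\beta-1}<\infty$ from Lemma~\ref{le2} and playing the resulting $L^1(\mathbb R)$-bounds against $|G(x)S(x)|\asymp|x|^{-1}$ on a set of infinite logarithmic length. Your way of producing that set (using $\sum_n|t_n|c_n<\infty$, available because $k=2$, to show $x\,G(x)S(x)/A(x)\to\sum_nc_n\ne0$ away from $T$) is a pleasant shortcut past the Khrushchev--Vinogradov lemma that the paper invokes. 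You also correctly flag the one genuinely new difficulty relative to $T=\mathbb Z$, namely that $t(G)$ may lie in $[\pi,2\pi)$, so that sampling on $\mathbb Z$ alone no longer yields $G\in L^1(\mathbb R)$; the paper glosses over this (``conclude as in the proof of Theorem~\ref{onetwoint}''), and your two-point remedy is the right kind of fix, though as stated the divided difference only controls $G'(\xi_n)$ at some $\xi_n\in(n,n+d_n)$ rather than $G'(n)$, so the Hermite-type sampling statement you appeal to still has to be formulated and proved for such perturbed nodes.

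The one genuine gap is the existence part. You set up the correct framework (split $T\setminus\{\lambda\}=\mathcal Z_G\sqcup\mathcal Z_S$, prescribe $\mu$, reduce to summability conditions, apply Lemma~\ref{le1}) but explicitly leave open the choice of the splitting and of the asymptotics of $G$ and $S$ --- which is precisely the content of this assertion, since it must work for every $\beta\in(0,1)$. The paper's choice is concrete and not delicate: take $\Lambda\subset\mathbb Z\setminus\{0\}$ with the strong linear asymptotics of Lemma~\ref{dfs1} so that $S(x)=\prod_{n\in\Lambda}(1-x/n)\asymp(1+|x|)^{-(\beta+1)/2}$ on $\mathbb Z\setminus\Lambda$, set $G=A/(zS)$ (so $\mathcal Z_G=T\setminus(\Lambda\cup\{0\})$), and put $\mu_t=(1+|t|)^{(\beta+1)/2}$ on $\Lambda\cup\{0\}$ and $\mu_t=(1+|t|)^{(3\beta-1)/2}$ on the rest of $T$; then \eqref{mu1}, \eqref{mu2}, \eqref{er1}--\eqref{er3} all reduce to power sums with exponent below $-1$ exactly when $0<\beta<1$. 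Note also that your claimed reduction ``to the two conditions'' is slightly too optimistic: \eqref{mu1} on $\mathcal Z_G$ requires $\sum_{t\in\mathcal Z_G}(1+|t|)^{\beta-3}/|S(t)|<\infty$, which needs a \emph{lower} bound on $|S|$ at those points and does not follow from $\sum_{t\in\mathcal Z_G}|S(t)|(1+|t|)^{\beta-1}<\infty$ by Cauchy--Schwarz; it does hold once $S$ has the two-sided asymptotics above.
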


\begin{proof}
Set
$$
A(z)=\sin(\pi z)\cdot\prod_{n\in\mathbb{Z}}\biggl{(}1-\frac{z}{n+(2+|n|)^{-\beta}}\biggr{)}.
$$
Then 
$$
|A'(t)|\asymp (1+|t|)^{-\beta},\qquad t\in T.
$$
Since the space $\mathcal{HC}(T,\mu)$ is regular, by \eqref{mu1} and \eqref{mu2} we obtain that
$$
\sum_{t\in  T} (1+|t|)^{\beta-2}<\infty,
$$
and, hence, $\beta<1$.

Next, if ${\rm Chain}(\mu)$ contains two contiguous indivisible intervals, then, as in the proof of Theorem~\ref{onetwoint}, relations \eqref{mu1}, \eqref{mu2}, 
and \eqref{eq7} imply that
\begin{align*}
\sum_{t\in T}(S^2(t)+(1+|t|)^{-2})\mu_t&<\infty,\\
\sum_{t\in T}(t^{2\beta+2}G^2(t)+(1+|t|)^{2\beta-2})\mu^{-1}_t&<\infty,
\end{align*}
and we conclude as in the proof of Theorem~\ref{onetwoint}.

If now $\beta\in(0,1)$, let us verify that ${\rm Chain}(\mu)$ can contain an indivisible interval. Given $n\in\mathbb Z$, set $n^*=n+(2+|n|)^{-\beta}$. Choose $\Lambda\subset\mathbb Z$ such that $0\not\in \Lambda$ and 
$$
S(x)=\prod_{n\in\Lambda}\Bigl(1-\frac{x}n\Bigr)\asymp (1+|x|)^{-(\beta+1)/2},\qquad x\in\mathbb Z\setminus\Lambda,
$$
and set $G(z)=A(z)/(zS(z))$. 
Denote $\Lambda^*=\{n^*:n\in\Lambda\}$. 
We have
\begin{align*}
|S(t)|&\asymp
\begin{cases}
(1+|t|)^{-(3\beta+1)/2},\qquad t\in \Lambda^*,\\
(1+|t|)^{-(\beta+1)/2},\qquad t\in T\setminus(\Lambda\cup\Lambda^*),
\end{cases}\\
|G(t)|&\asymp (1+|t|)^{-(\beta+1)/2},\qquad t\in \Lambda\cup\{0\}.
\end{align*}
Now, we set
$$
\mu_t=
\begin{cases}
(1+|t|)^{(\beta+1)/2},\qquad t\in \Lambda\cup\{0\},\\
(1+|t|)^{(3\beta-1)/2},\qquad t\in T\setminus(\Lambda\cup\{0\}),
\end{cases}
$$
A direct calculation shows that the measure $\mu=\sum_{t\in T}\mu_t\delta_t$, satisfies conditions \eqref{mu1} and \eqref{mu2}. Furthermore, 
$\lim_{|y|\to\infty}G(iy)/A(iy)=0$,  
$S\in\ell^2(\mu)$, $G\slash A'\in\ell^2(1\slash\mu)$, and we conclude by applying Lemma~\ref{le1}.
\end{proof}

\subsection{Infinite number of indivisible intervals. Proof of Theorem~\ref{th2}}
\leavevmode

In the following result we consider lacunary canonical products constructed by rapidly growing zeros $\{z_k\}_{k\ge 1}$, $|z_{k+1}/z_k|\ge q>1$, 
$k\ge 1$.

\begin{proposition}
Let $U$ be a lacunary canonical product $\Lambda\subset\mathbb{R}$, $\dist(\Lambda,\mathbb{Z})>0$, $T=\Lambda\cup \mathbb Z$, 
and let $A(z)=\sin(\pi z)U(z)$. Then we can find a measure $\mu$ on $T$ such that the corresponding space $\mathcal{HC}(A,T)$ contains a two sided sequence of infinitely
many contiguous indivisible intervals. 
\end{proposition}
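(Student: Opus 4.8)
The strategy is to produce, for every zero $z_k$ of $U$, one indivisible interval inside the chain by a single application of Lemma~\ref{le1}, with one fixed measure $\mu$ on $T$ serving all the $z_k$. (One cannot instead hope to place, for each $m$, a block of $m$ contiguous intervals at a single location: by Lemma~\ref{le2} that would force $\mu_n\gtrsim|n|^{2(m-1)}$ on $\mathbb Z$ for every $m$.) For fixed $z_k\in\Lambda$ use the factorization
$$
\frac{A(z)}{z-z_k}=\sin(\pi z)\cdot U_k(z),\qquad U_k(z):=\frac{U(z)}{z-z_k},
$$
and set $S:=\sin(\pi z)$, $G:=U_k$. Then $GS/A=1/(\cdot-z_k)$, so \eqref{er3} holds with $c_n=\delta_{n,z_k}$, whence $\{c_n\}\in\ell^1$ and $\sum_n c_n=1\ne0$; moreover $G(iy)/A(iy)=\bigl((iy-z_k)\sin(\pi iy)\bigr)^{-1}\to0$. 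Since $U$, being lacunary, has order $0$, the function $G=U_k$ has exponential type $0$, so the interval obtained from Lemma~\ref{le1} will have type $0$.

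It remains to choose $\mu$, independently of $k$, so that \eqref{er1}, \eqref{er2}, \eqref{mu1} and the regularity condition \eqref{mu2} all hold. Here the hypotheses $\dist(\Lambda,\mathbb Z)>0$ and lacunarity do the work. Because $|\sin(\pi t)|\asymp1$ on $\Lambda$, \eqref{er1} reduces to $\sum_{t\in\Lambda}\mu_t<\infty$; and $G/A'$ vanishes on $\Lambda\setminus\{z_k\}$, is $\asymp1$ at $z_k$ (it equals $1/\sin(\pi z_k)$ there, since $A'(z_k)=\sin(\pi z_k)U'(z_k)$ and $G(z_k)=U'(z_k)$), and on $\mathbb Z$ obeys $|G(n)/A'(n)|\asymp|n-z_k|^{-1}$, using $|A'(n)|=\pi|U(n)|$. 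Lacunarity together with $\dist(\Lambda,\mathbb Z)>0$ forces $|U(n)|$ on $\mathbb Z$ and $|U'(z_k)|=|z_k|^{-1}\prod_{j\ne k}|1-z_k/z_j|$ on $\Lambda$ to grow faster than any power of $|n|$, resp.\ $|z_k|$, so the series in \eqref{mu2} converges as soon as $\mu_n\gtrsim1$ on $\mathbb Z$ and $\mu_{z_k}\gtrsim|z_k|^{-N}$ on $\Lambda$ for some fixed $N$. Hence $\mu_n:=1$ for $n\in\mathbb Z$ and $\mu_{z_k}:=|z_k|^{-2}$ for $z_k\in\Lambda$ satisfies \eqref{er1}, \eqref{er2}, \eqref{mu1}, \eqref{mu2} simultaneously for every $k$, and by Lemma~\ref{le1} the chain ${\rm Chain}(\mu)$ contains, for each $z_k$, an indivisible interval $I_k$ of exponential type $0$, lying strictly inside the chain because $S$ and $G$ are not polynomials.

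Finally one must organize the $I_k$ into the asserted two-sided sequence of \emph{contiguous} intervals, all lying in the part of the chain on which the type vanishes. Let $\mathcal H_k^-\subset\mathcal H_k^+$ be the de Branges subspaces bounding $I_k$, so $\mathcal H_k^+$ is the smallest de Branges subspace of $\mathcal{HC}(\mu)$ containing $G_k=U_k$, and $\dim(\mathcal H_k^+\ominus\mathcal H_k^-)=1$. One first locates $\mathcal H_k^+$ in the chain and checks that distinct $z_k$ give distinct, ordered positions — intuitively because $U_k$ ``uses up'' an initial, $|z_k|$-dependent portion of $U$ — so that, reindexing $\Lambda$ by $\mathbb Z$, the $I_k$ form a genuinely two-sided sequence. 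The main obstacle is then contiguity: one has to show $\mathcal H_{k+1}^-=\mathcal H_k^+$, i.e.\ that no point of $T$ enters the chain strictly between $I_k$ and $I_{k+1}$; in particular the integers lying between the scales $|z_k|$ and $|z_{k+1}|$ must not separate these two subspaces. I expect this to need a quantitative accounting of which points of $T$ are consumed at each stage of the type-$0$ part of the chain — conveniently expressed through the finiteness criterion of Lemma~\ref{le7}(v) applied to the candidate bounding subspaces — and very likely a more careful distribution of the masses $\mu_{z_k}$ and of those on the intervening integers, chosen so as to force $\mathcal H_{k+1}^-$ and $\mathcal H_k^+$ to coincide.
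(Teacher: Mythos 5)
Your per-$k$ construction ($S=\sin\pi z$, $G=U/(\cdot-z_k)$, $GS/A=1/(\cdot-z_k)$) does yield, via Lemma~\ref{le1}, one indivisible interval for each $z_k\in\Lambda$, and your verification of \eqref{er1}, \eqref{er2}, \eqref{mu1}, \eqref{mu2} for the measure $\mu_n=1$ on $\mathbb Z$, $\mu_{z_k}=|z_k|^{-2}$ on $\Lambda$ is essentially sound. But the proposition asserts a two-sided sequence of \emph{contiguous} indivisible intervals, and that is exactly the part you leave unproved: your last paragraph is a list of things one ``would need'' (ordering of the $\mathcal H_k^{\pm}$, absence of intervening points of $T$, a redistribution of the masses), not an argument. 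Nothing in your construction forces $\mathcal H_{k+1}^-=\mathcal H_k^+$; indivisible intervals produced by Lemma~\ref{le1} from unrelated pairs $(G_k,S_k)$ are in general separated by non-degenerate pieces of the chain, so as written you have only shown the existence of infinitely many indivisible intervals, which is a strictly weaker statement. This is a genuine gap, not a routine verification.

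Moreover, the parenthetical by which you dismiss the alternative strategy is wrong, and it is precisely that strategy which closes the gap. The condition $z^{k-1}G/A'\in\ell^2(1/\mu)$ in Lemma~\ref{le2} requires $\sum_n |n|^{2(k-1)}|G(n)|^2A'(n)^{-2}\mu_n^{-1}<\infty$; it does \emph{not} force $\mu_n\gtrsim|n|^{2(k-1)}$, because on $\mathbb Z$ one has $|A'(n)|=\pi|U(n)|$, and $|U(n)|$ grows faster than any power of $|n|$ (lacunarity plus $\dist(\Lambda,\mathbb Z)>0$). So if one splits $A/z=GS$ with $|G|$ and $|S|$ each comparable to $\dist(\cdot,\mathcal Z_G)$, resp.\ $\dist(\cdot,\mathcal Z_S)$, times $\psi(t)(1+|t|)^{-1/2}$ where $\psi(t)=1+\max_{|z|=t}|U(z)|^{1/2}$, and takes $\mu_t=\psi(t)^{-3}$ on $\mathcal Z_G$ and $\mu_t=1$ elsewhere, then $z^kS\in\ell^2(\mu)$ and $z^kG/A'\in\ell^2(1/\mu)$ hold simultaneously for \emph{every} $k\ge0$, and the analogue of Lemma~\ref{le2} produces arbitrarily long blocks of contiguous indivisible intervals at a single location, i.e.\ the desired two-sided infinite contiguous family in one stroke. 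In short: the super-polynomial growth of $U$ on $\mathbb R$ is the mechanism that makes all powers of $z$ harmless, and your argument discards it at the outset.
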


\begin{proof} Here we just choose entire functions $G$ and $S$ real on the real line such that $zG(z)S(z)=A(z)$, $|G(t)|\asymp \dist(t,\mathcal{Z}_G)\psi(t)
(1+|t|)^{-1/2}$, $|S(t)|\asymp \dist(t,\mathcal{Z}_S)\psi(t)
(1+|t|)^{-1/2}$, where $\psi(t)=1+\max_{|z|=t}|U(z)|^{1/2}$, $t\in\mathbb R$. Then set $\mu_t=\psi(t)^{-3}$, $t\in\mathcal{Z}_G$, $\mu_t=1$, $t\in\mathcal{Z}_A\setminus\mathcal{Z}_G$. Then we obtain that $z^kS\in\ell^2(\mu)$, $z^kG/A'\in\ell^2(1/\mu)$,  for any $k\ge 0$ and apply a natural analog of Lemma~\ref{le2}.  
\end{proof}

Next we need some standard information on the asymptotics of canonical products associated with very regular sequences on the real line.  
 
Given a countable symmetric $\Lambda\subset\mathbb R\setminus\{0\}$ of finite linear density, we set 
$$
\mathcal C_\Lambda(z)=\prod_{t\in\Lambda}\Bigl(1-\frac{z}{t}\Bigr)=\prod_{t\in\Lambda_+}\Bigl(1-\frac{z^2}{t^2}\Bigr),
$$
where $\Lambda_+=\Lambda\cap\mathbb R_+$. 
Denote by $n_\Lambda$ the counting function of $\Lambda$,
$$
n_\Lambda(x)=\card\bigl(\Lambda\cap(0,x]\bigr),\qquad x>0.
$$
We say that a symmetric $\Lambda\subset\mathbb R\setminus\{0\}$ has strong linear asymptotics $a_\Lambda x+b_\Lambda$ if the function 
$$
\psi_\Lambda(t)=\int_0^t \bigl( n_\Lambda(x)-\lfloor a_\Lambda x+b_\Lambda\rfloor\cdot\mathbf 1_{[1,\infty)}(x) \bigr)\,dx
$$
is bounded on $(0,\infty)$. We say that $\Lambda\subset\mathbb R$ is uniformly discrete if $\inf\bigl\{|t_1-t_2|:t_1,t_2\in\Lambda,\, t_1\not= t_2\bigr\}>0$. 

\begin{lemma} Given an infinite symmetric uniformly discrete $\Lambda\subset\mathbb R\setminus\{0\}$ with strong linear asymptotics $a x+b$, we have 
$$
|\mathcal C_\Lambda(z)|\asymp \min(1,\dist(z,\Lambda))(1+|z|)^{-1-2b}e^{a\pi|\Im z|},\qquad z\in\mathbb C.
$$
\label{dfs1}
\end{lemma}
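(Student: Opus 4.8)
The plan is to compare $\mathcal C_\Lambda$ with an explicit model canonical product built from the Gamma function, controlling the discrepancy by the boundedness of $\psi_\Lambda$. \emph{Step 1 (the model).} Uniform discreteness together with boundedness of $\psi_\Lambda$ forces $n_\Lambda(x)=ax+O(1)$, so $\mathcal C_\Lambda$ is entire of exponential type $a\pi$. Choose a symmetric uniformly discrete $\Lambda^0\subset\mathbb R\setminus\{0\}$ which again has strong linear asymptotics $ax+b$ but for which $\mathcal C_{\Lambda^0}$ is explicit: taking $\Lambda^0_+$ to be a finite modification of the arithmetic progression $\{(k-b)/a\}_{k\ge 1}$, the product telescopes into $\mathcal C_{\Lambda^0}(z)=\mathrm{const}\cdot P(z)/\bigl(\Gamma(c-az)\Gamma(c+az)\bigr)$ with $P$ a fixed polynomial. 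From Stirling's formula in a sector (away from $\mathbb R$) together with the reflection formula $\Gamma(s)\Gamma(1-s)=\pi/\sin\pi s$ near $\mathbb R$ — where one Gamma factor has argument with large negative real part and is therefore governed by a sine — one obtains $|\mathcal C_{\Lambda^0}(z)|\asymp\min(1,\dist(z,\Lambda^0))(1+|z|)^{-1-2b}e^{a\pi|\Im z|}$ for all $z\in\mathbb C$; the sine is precisely what supplies the $\min(1,\dist(\cdot,\Lambda^0))$ factor, and the polynomial power comes out of the Stirling exponents.

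\emph{Step 2 (comparison off the zeros).} Writing $\log|\mathcal C_\Lambda(z)|=\int_0^\infty\log|1-z^2/x^2|\,dn_\Lambda(x)$ and subtracting the analogous identity for $\Lambda^0$, with $\nu=n_\Lambda-n_{\Lambda^0}$ and $\Psi(x)=\int_0^x\nu$ (bounded by hypothesis, $\equiv 0$ near $0$), two integrations by parts give, with all boundary terms vanishing, $\log|\mathcal C_\Lambda(z)|-\log|\mathcal C_{\Lambda^0}(z)|=\int_0^\infty\Psi(x)\,\partial_x^2\log|1-z^2/x^2|\,dx$, where $|\partial_x^2\log|1-z^2/x^2||\le 2/x^2+1/|x-z|^2+1/|x+z|^2$. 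This is already $O(1)$ when $|\Im z|\ge 1$; using in addition that $\Psi$ is Lipschitz and that $\int_{-1}^{1}s(s^2+v^2)^{-2}\,ds=0$, one improves this, for $z$ whose real part is bounded away from $\Lambda\cup\Lambda^0$, to $|\log|\mathcal C_\Lambda(z)|-\log|\mathcal C_{\Lambda^0}(z)||\lesssim 1+\log^+(1/|\Im z|)$. Since off $\Lambda\cup\Lambda^0$ this difference is a harmonic function of $z$ and $\log^+(1/|\Im\cdot|)$ is integrable over small circles, the mean value property removes the residual logarithmic loss, so $|\mathcal C_\Lambda(z)|\asymp|\mathcal C_{\Lambda^0}(z)|$ whenever $\dist(z,\Lambda\cup\Lambda^0)$ is bounded below; together with Step 1 this proves the lemma for all such $z$ (there $\min(1,\dist(z,\Lambda))\asymp 1$).

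\emph{Step 3 (the distance factor near $\mathbb R$).} It remains to treat $z$ within a fixed small distance $\delta$ of $\Lambda$; by uniform discreteness $z$ is then close to a unique $t_0\in\Lambda$ and $|\Im z|<\delta$. Let $\tau_0$ be the point of $\Lambda^0$ nearest to $z$, and factor $\mathcal C_\Lambda(z)=(1-z^2/t_0^2)\,\mathcal C_{\Lambda\setminus\{\pm t_0\}}(z)$ and $\mathcal C_{\Lambda^0}(z)=(1-z^2/\tau_0^2)\,\mathcal C_{\Lambda^0\setminus\{\pm\tau_0\}}(z)$. Deleting \emph{one} zero from each set preserves the common strong linear asymptotics and, crucially, keeps $\int_0^x\bigl(n_{\Lambda\setminus\{\pm t_0\}}-n_{\Lambda^0\setminus\{\pm\tau_0\}}\bigr)$ \emph{uniformly} bounded — it differs from $\Psi$ by $\int_0^x(\mathbf 1_{[\tau_0,\infty)}-\mathbf 1_{[t_0,\infty)})$, which is $O(|t_0-\tau_0|)=O(1)$ — whereas deleting a zero from only one set would make that quantity grow like $|z|$. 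The two reduced sets now have no zeros within a fixed distance of $z$, so Step 2 applies and yields $|\mathcal C_{\Lambda\setminus\{\pm t_0\}}(z)|\asymp|\mathcal C_{\Lambda^0\setminus\{\pm\tau_0\}}(z)|=|\mathcal C_{\Lambda^0}(z)|/|1-z^2/\tau_0^2|$; multiplying back by $|1-z^2/t_0^2|\asymp|z-t_0|/(1+|z|)=\dist(z,\Lambda)/(1+|z|)$ and invoking Step 1 for $\mathcal C_{\Lambda^0}$ (with $|z-\tau_0|=\dist(z,\Lambda^0)$) gives $|\mathcal C_\Lambda(z)|\asymp\dist(z,\Lambda)(1+|z|)^{-1-2b}e^{a\pi|\Im z|}$, completing the proof. \textbf{Main difficulty:} Steps 1 and 2 are routine Stirling-and-integration-by-parts work; the real obstacle is Step 3, namely producing the $\min(1,\dist(z,\Lambda))$ factor near $\mathbb R$. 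The integration-by-parts comparison is stable only under perturbations of the counting function that keep $\int_0^x\nu$ bounded, and removing a single zero of $\mathcal C_\Lambda$ breaks this unless a matching zero of the model is removed too; arranging these paired deletions so that exactly the factor $\dist(z,\Lambda)/(1+|z|)$ comes out, with no spurious powers of $1+|z|$, is the delicate point. (A lesser technical point is upgrading, in Step 2, the $\log^+(1/|\Im z|)$ bound to a genuine $O(1)$ bound via harmonicity.)
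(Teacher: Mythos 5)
Your architecture coincides with the paper's: compare $\mathcal C_\Lambda$ with the explicit arithmetic‑progression model (the paper's $\Lambda_{a,b}=\{\pm(n-b)/a\}_{n\ge1}$ and your Gamma‑function product are the same object) and control the ratio by integrating by parts twice against the bounded primitive $\psi_\Lambda$, so that $\log|\mathcal C_\Lambda/\mathcal C_{\Lambda^0}|=\int_0^\infty\Psi\,\partial_x^2\log|1-z^2/x^2|\,dx$. You are in fact considerably more thorough than the paper, which verifies the estimate only on the line $\Im z=1$ and dismisses the rest of the plane with the phrase ``since the linear density of $\Lambda$ is $a$''; your Steps 2--3 (the Lipschitz/odd‑kernel refinement near $\mathbb R$, the harmonic mean‑value removal of the $\log^+(1/|\Im z|)$ loss, and the paired deletion of $\pm t_0$ and $\pm\tau_0$ keeping the integrated discrepancy bounded) supply exactly the propagation to the real axis and to neighbourhoods of the zeros that the paper leaves implicit. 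Your identification of the paired deletion as the crux is correct, and that step is sound: $|t_0-\tau_0|=O(1)$ because the model has bounded gaps, so the new primitive differs from $\Psi$ by $O(1)$ and remains uniformly Lipschitz. One region is, however, left uncovered: $z$ within a small distance of $\Lambda^0$ but at distance $\ge\delta$ from $\Lambda$. There Step 2 does not apply (and its conclusion $|\mathcal C_\Lambda|\asymp|\mathcal C_{\Lambda^0}|$ would be false, $\mathcal C_{\Lambda^0}$ having a zero nearby), and Step 3 is not invoked. The same paired deletion repairs this --- remove the nearby $\pm\tau_0\in\Lambda^0$ together with the nearest $\pm t_0\in\Lambda$, which lies at distance $O(1)$ because boundedness of $\psi_\Lambda$ plus monotonicity of $n_\Lambda$ forces $n_\Lambda-\lfloor ax+b\rfloor$ to be bounded, hence $\Lambda$ has bounded gaps --- but you need to say so.

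The more substantive issue is the exponent. Carried out correctly, your Step 1 yields $|\mathcal C_{\Lambda^0}(z)|\asymp\min(1,\dist(z,\Lambda^0))(1+|z|)^{-1+2b}e^{a\pi|\Im z|}$, i.e.\ $-1+2b$ rather than the stated $-1-2b$: from $\mathcal C_{\Lambda^0}(z)=\Gamma(1-b)^2/\bigl(\Gamma(1-b-az)\Gamma(1-b+az)\bigr)$, the reflection formula and Stirling give $\Gamma(b+az)/\Gamma(1-b+az)\sim(az)^{2b-1}$ times the sine factor. The test case $a=1$, $b=\tfrac12$ is decisive: then $\Lambda^0=\{\pm(n-\tfrac12)\}_{n\ge1}$, $\mathcal C_{\Lambda^0}=\cos\pi z$, which is bounded above and below on $\Im z=1$, not $O((1+|x|)^{-2})$. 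So the lemma as printed contains a sign error in the power of $1+|z|$ (one that the paper's own proof shares, via the unproved claim $|\mathcal C_{\Lambda_{a,b}}(x+i)|\asymp(1+|x|)^{-1-2b}$, and that propagates into the choice $b_k=-(1+r_k)/6$ in the proof of Theorem 2, where the opposite sign is needed); your write‑up silently reproduces it by asserting that Stirling delivers the stated exponent. With $-1+2b$ in place of $-1-2b$ throughout, and the missing case above filled in, your argument is correct and complete.
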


\begin{proof}
Since the linear density of $\lambda$ is $a$, we need only to verify that 
$$
|\mathcal C_\Lambda(x+i)|\asymp (1+|x|)^{-1-2b}, \qquad x\in\mathbb R.
$$
It is easily seen that for $\Lambda_{a,b}=\{\pm(n-b)/a\}_{n\ge 1}$ (with trivial modifications for small $n$) we have $n_{\Lambda_{a,b}}(x)=\lfloor ax+b\rfloor\cdot\mathbf 1_{[1,\infty)}(x)$,
$$
|\mathcal C_{\Lambda_{a,b}}(x+i)|\asymp (1+|x|)^{-1-2b},\qquad x\in\mathbb R.
$$
Therefore, we need only to check that the function $W$,
$$
W(x)=\log\Bigl| \frac{\mathcal C_\Lambda(x+i)}{\mathcal C_{\Lambda_{a,b}}(x+i)} \Bigr|=\int_0^\infty \log\Bigl| 1-\Bigl(\frac{x+i}{t}\Bigr)^2 \Bigr|\,\bigl(dn_\Lambda(t)-dn_{\Lambda_{a,b}}(t)\bigr),
$$
is bounded on the real line. Integrating by parts twice and using that $n_\Lambda-n_{\Lambda_{a,b}}$ and $\psi_\Lambda$ are bounded, we obtain that 
$$
W(x)=\int_0^\infty \psi_\Lambda(t)\cdot \Re\, \Bigl[\frac{2}{t^2}-\frac1{(t-x+i)^2}-\frac1{(t+x+i)^2}\Bigr] \,dt.
$$
The function in the right hand side is bounded because $\psi_\Lambda$ are bounded.
\end{proof}

\begin{lemma} Let $\Lambda_1\subset\Lambda_2$ be two symmetric subsets of $\mathbb R\setminus\{0\}$ 
with strong linear asymptotics, correspondingly, $a_1 x+b_1$ and $a_2 x+b_2$. Given $a\in(a_1,a_2)$ and $b\in(b_1,b_2)$, there exists 
a symmetric subset $\Lambda$ of $\mathbb R\setminus\{0\}$ with linear asymptotics $ax+b$ such that $\Lambda_1\subset\Lambda\subset\Lambda_2$.
\label{dfs2}
\end{lemma}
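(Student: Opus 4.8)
The plan is to realize $\Lambda$ as a greedily chosen subset of $\Lambda_2$ whose counting function shadows the target $\lfloor ax+b\rfloor$. Since all three sets are symmetric, it suffices to work on $\mathbb R_+$: I would construct $\Lambda_+$ with $\Lambda_{1,+}\subset\Lambda_+\subset\Lambda_{2,+}$ and $n_{\Lambda_+}(x)=\lfloor ax+b\rfloor+O(1)$ for $x\ge 1$, and then set $\Lambda=\Lambda_+\cup(-\Lambda_+)$. First I would record the elementary fact that strong linear asymptotics $a_i x+b_i$ already forces the \emph{pointwise} estimate $n_{\Lambda_i}(x)=\lfloor a_i x+b_i\rfloor+O(1)$ on $[1,\infty)$: a deviation of size $m$ at a point $x_0$ persists, by monotonicity of $n_{\Lambda_i}$, on an interval of length $\asymp m$ to one side of $x_0$, hence contributes $\asymp m^2$ to the bounded function $\psi_{\Lambda_i}$, forcing $m=O(1)$. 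In particular $\Lambda_2$ has bounded gaps, and since $a_1<a<a_2$ and $b_1<b<b_2$ we get $n_{\Lambda_1}(x)<\lfloor ax+b\rfloor<n_{\Lambda_2}(x)$ for all large $x$, with both gaps growing linearly in $x$.

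Next, with $N(x)=\lfloor ax+b\rfloor\cdot\mathbf 1_{[1,\infty)}(x)$, I would process the points of $\Lambda_{2,+}$ in increasing order and build $\Lambda_+$ by the rule: always take a point lying in $\Lambda_1$, and take a point of $\Lambda_{2,+}\setminus\Lambda_1$ precisely when the count assembled so far is still strictly below $N$ at that point (``include iff behind''). Then $\Lambda_{1,+}\subset\Lambda_+\subset\Lambda_{2,+}$ by construction, hence $n_{\Lambda_1}\le n_{\Lambda_+}\le n_{\Lambda_2}$, and the only thing left is the bound $|n_{\Lambda_+}-N|=O(1)$.

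Here the two strict density inequalities keep the greedy error from drifting. On any interval $(y,x]$ on which $n_{\Lambda_+}>N$, the rule has fed $n_{\Lambda_+}$ only from the obligatory points of $\Lambda_1$, so $n_{\Lambda_+}(x)-n_{\Lambda_+}(y)=n_{\Lambda_1}(x)-n_{\Lambda_1}(y)=a_1(x-y)+O(1)$ while $N(x)-N(y)=a(x-y)+O(1)$; since $a>a_1$ this forces $x-y=O(1)$ and so the surplus at $x$ is $O(1)$. Symmetrically, on any interval on which $n_{\Lambda_+}<N$ the rule has fed $n_{\Lambda_+}$ from \emph{every} point of $\Lambda_{2,+}$ there, so $n_{\Lambda_+}(x)-n_{\Lambda_+}(y)=n_{\Lambda_2}(x)-n_{\Lambda_2}(y)=a_2(x-y)+O(1)$ against $N(x)-N(y)=a(x-y)+O(1)$, and $a<a_2$ again forces $x-y=O(1)$; the bounded gaps of $\Lambda_2$ (together with $N$ rising by $O(1)$ across each gap) ensure one never falls behind between two consecutive points of $\Lambda_{2,+}$. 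A routine $\pm1$ bookkeeping then yields $n_{\Lambda_+}(x)=\lfloor ax+b\rfloor+O(1)$, and reflection gives the desired $\Lambda$.

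The step I expect to demand the most care is precisely this boundedness of the running error: the mechanism --- the ambient set being \emph{strictly} denser and the forced set \emph{strictly} sparser than the target, so that local discrepancies are annihilated on a bounded scale --- must be made quantitative, and it is there that the equivalence of strong linear asymptotics with a pointwise $O(1)$-deviation (hence the bounded gaps of $\Lambda_2$) is used. If one wants $\Lambda$ to carry \emph{strong} linear asymptotics as well, I would rerun the same scheme against a round-to-nearest version of the continuous target $ax+b$ and check, by the same two-sided comparison, that the forced $\Lambda_1$-insertions shift the signed error only by $O(1)$ on average.
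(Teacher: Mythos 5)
The paper itself dismisses this lemma with ``By observation,'' so there is no written argument to compare against; judged on its own terms, your reduction to the half-line, your preliminary observation that strong linear asymptotics forces the pointwise bound $n_{\Lambda_i}(x)=\lfloor a_ix+b_i\rfloor+O(1)$ (a deviation $m$ persists over a length $\asymp m$ and contributes $\asymp m^2$ to the bounded $\psi_{\Lambda_i}$), the resulting bounded gaps of $\Lambda_2$, and the two-sided ``strictly denser ambient set / strictly sparser forced set'' analysis showing that the greedy error $n_{\Lambda_+}-\lfloor ax+b\rfloor$ stays bounded are all correct and well organized.

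However, there is a genuine gap: what you prove about the constructed set is only the pointwise estimate $n_{\Lambda}(x)=\lfloor ax+b\rfloor+O(1)$, whereas the lemma, as it is actually used, must deliver \emph{strong} linear asymptotics $ax+b$, i.e.\ boundedness of $\psi_\Lambda(t)=\int_0^t\bigl(n_\Lambda(x)-\lfloor ax+b\rfloor\mathbf 1_{[1,\infty)}(x)\bigr)\,dx$. These are not equivalent, and the difference is exactly the point: the pointwise bound is insensitive to replacing $b$ by any $b'$ with $|b-b'|=O(1)$, while boundedness of $\psi_\Lambda$ pins $b$ down exactly, and the value of $b$ enters the exponents $(1+|z|)^{-1-2b}$ in Lemma~\ref{dfs1} and hence the estimates \eqref{dfs3}--\eqref{dfs4} in the proof of Theorem~\ref{th2} (whose proof also explicitly uses boundedness of $\psi_\Lambda$, not just of $n_\Lambda-\lfloor ax+b\rfloor$). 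Worse, your ``include iff strictly behind'' rule systematically lags: away from the forced $\Lambda_1$-points the count sits at the value of $\lfloor ax+b\rfloor$ taken at the last accepted point, so the signed error is generically negative and $\psi_\Lambda(t)$ drifts linearly to $-\infty$ (compare $\Lambda_+=\mathbb Z_{>0}$ against the target $x+\tfrac12$: the pointwise error is in $\{0,-1\}$ but its integral is $\approx -t/2$). Your concluding remark about rerunning the scheme against a round-to-nearest target does not address this drift and leaves unproved precisely the assertion that the averaged signed error is $O(1)$. A workable repair is to run the greedy on the integrated error instead — accept $t\in\Lambda_{2,+}\setminus\Lambda_1$ iff the running value of $\psi$ at $t$ is $\le 0$ — and then use your two-sided density comparison twice, first to keep $n_\Lambda-\lfloor ax+b\rfloor$ bounded and then, on maximal intervals where $\psi$ has a fixed sign, to show $\psi$ returns to $0$ within a bounded length; as written, the construction does not produce the set the lemma requires.
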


\begin{proof} By observation.
\end{proof}

\begin{proof}[Proof of Theorem~\ref{th2}] We consider just the case of infinite $\Sigma$. The other case is much simpler. Let $\Sigma=\{\pi s_k\}_{k\ge 1}$. 
By induction in $k\ge 1$, we construct a disjoint system of intervals $(a_k,b_k)\subset (0,1)$ such that $s_k<s_m\implies b_k<a_m$, $k,m\ge 1$. Set $r_k=(a_k+b_k)/2$, $k\ge 1$. 
Also by induction in $k\ge 1$, we construct, using Lemma~\ref{dfs2}, symmetric sets $\Lambda_k\subset\mathbb Z\setminus\{0\}$ with 
strong linear asymptotics $s_k x-(1+r_k)/6$ such that $s_k<s_m\implies \Lambda_k\subset \Lambda_m$, $k,m\ge 1$. 

Set $G_k=\mathcal C_{\Lambda_k}$, $S_k=\sin(\pi z)/(zG_k(z))$, $k\ge 1$. For every $k\ge 1$, by Lemma~\ref{dfs1} we have 
\begin{align}
|G_k(n)|&\asymp (1+|n|)^{(r_k-2)/3}, \qquad n\in\mathbb Z\setminus \Lambda_k,\label{dfs3}\\
|S_k(n)|&\asymp (1+|n|)^{-(r_k+1)/3}, \qquad n\in\Lambda_k.\label{dfs4}
\end{align}

Given $n\in\mathbb Z\setminus\{0\}$, we set 
$$
s(n)=\sup\,\bigl\{s_k:k\ge1,\,n\not\in\Lambda_k\bigr\}.
$$
By construction, if $s_k>s(n)$, then $n\in\Lambda_k$, and if $s_k<s(n)$, then $n\not\in\Lambda_k$. 
If $s(n)=s_m\in\Sigma$ and $n\in\Lambda_m$, then we set $u(n)=a_m$, otherwise set 
$$
u(n)=\sup\,\bigl\{b_k:s_k<s(n)\bigr\}.
$$
Now, we set $\mu_0=1$, 
$$
\mu_n=(1+|n|)^{(2u(n)-1)/3}, \qquad n\in \mathbb Z\setminus\{0\}.
$$
Then the measure $\mu=\sum_{n\in\mathbb Z}\mu_n\delta_n$ satisfies conditions \eqref{mu1} and \eqref{mu2}.  

Fix $k\ge 1$. To prove the existence of an indivisible interval corresponding to the exponential type $s_k$, by Lemma~\ref{le1}, 
we need only to verify that 
$S_k\in\ell^2(\mu)$ and $G_k\in\ell^2(1/\mu)$.

If $n\in\Lambda_k$, then $s(n)\le s_k$ and $u(n)\le a_k$. Therefore, by \eqref{dfs4}, we have 
\begin{multline*}
\sum_{n\in\Lambda_k}|S_k(n)|^2\mu_n\asymp \sum_{n\in\Lambda_k}(1+|n|)^{-(2r_k+2)/3}(1+|n|)^{(2u(n)-1)/3}\\ \le 
\sum_{n\in\mathbb Z}(1+|n|)^{-(2r_k+2)/3+(2a_k-1)/3}=\sum_{n\in\mathbb Z}(1+|n|)^{-1+2(a_k-r_k)/3}<\infty.
\end{multline*}
If $n\in\mathbb Z\setminus(\Lambda_k\cup\{0\})$, then $s(n)\ge s_k$ and $u(n)\ge b_k$. Therefore, by \eqref{dfs3}, we have 
\begin{multline*}
\!\!\sum_{n\in\mathbb Z\setminus(\Lambda_k\cup\{0\})}|G_k(n)|^2\mu^{-1}_n\asymp \sum_{n\in\mathbb Z\setminus(\Lambda_k\cup\{0\})}(1+|n|)^{(2r_k-4)/3}(1+|n|)^{(1-2u(n))/3}\\ \le 
\sum_{n\in\mathbb Z}(1+|n|)^{(2r_k-4)/3+(1-2b_k)/3}=\sum_{n\in\mathbb Z}(1+|n|)^{-1+2(r_k-b_k)/3}<\infty.
\end{multline*}
This completes the proof.
\end{proof}

\section{The same type subspaces}
\label{se4}

\subsection{Regularity of growth of exponential type. Proof of Theorem 5}

The proof of Theorem~\ref{regth} is based on a combination of an atomization result in \cite{B24} and some fact on the completeness of mixed systems in the Paley--Wiener spaces from \cite{B25}.

\begin{proof}[Proof of Theorem~\ref{regth}]
We start with the following simple fact. If two positive weights are comparable, that is $w_1(x)\asymp w_2(x)$, 
then the chains of the de Branges subspaces are the same, i.e. the de Branges subspaces from different chains coincide as sets with equivalent norms. 
Therefore, it is sufficient to consider any weight comparable to $w$. 

Now we apply Theorem~2.6 from \cite{B24} (with sufficiently large $\sigma>0$) and
construct an entire function $H$ of finite exponential type $b$ with simple zeros such that 
$$
|H(x)|^2\asymp w(x),\qquad x\in\mathbb R.
$$
It remains to prove that the measure $|H(x)|^2\,dm$ generates a thin chain. 
Assume the contrary. Then there exist two different de Branges spaces $\mathcal{H}_1$, $\mathcal{H}_2$ from the chain of the same exponential type.
Let us fix some non-trivial function $F_1$ from $\mathcal{H}_1\ominus \mathcal{H}_2$.
Let $F_2$ be an $A$-function corresponding to $\mathcal{H}_2$ such that $\mathcal{Z}_{F_2}\cap \mathcal{Z}_G=\emptyset$.
Set $a=t(F_1)=t(F_2)$. We have 
$$
F_1\perp\frac{F_2(z)}{z-\lambda}, \qquad \lambda\in \mathcal{Z}_{F_2},
$$
where $\perp$ means orthogonality in $\mathcal{H}_1$. We recall that the space $\mathcal{H}_1$ is isometrically embedded
in $L^2(|H|^2\,dm)$.
Hence, 
\begin{equation}
\int_{\mathbb{R}}\frac{F_1(x)H(x)\overline{F_2(x)H(x)}\,dx}{x-\lambda}=0,\qquad \lambda\in\mathcal{Z}_{F_2}.
\label{ort1}
\end{equation}
Since $F_1H,F_2H/(\cdot-\lambda)\in L^2(\mathbb{R})$, $\lambda\in\mathcal{Z}_{F_2}$, the functions $F_1G,F_2H/(\cdot-\lambda)$, $\lambda\in\mathcal{Z}_{F_2}$, belong to the Paley--Wiener space $\mathcal{P}W_{a+b}$. 
Thus, equation \eqref{ort1} can be considered as orthogonality of some vectors from $\mathcal{P}W_{a+b}$. 
\smallskip
Denote by $k_\lambda$ the reproducing kernel in the space $\mathcal{P}W_{a+b}$ at the point $\lambda\in\mathbb C$.
From \eqref{ort1} we obtain that the system
$$
\bigl\{k_{\lambda}\bigr\}_{\lambda\in \mathcal{Z}_H}\cup\biggl{\{}\frac{F_2H}{\cdot-\lambda}\biggr{\}}_{\lambda\in\mathcal{Z}_{F_2}}
$$
is not complete in $\mathcal{P}W_{a+b}$. This contradicts the following lemma:

\begin{lemma}
Let $T=T_1T_2$ be an entire function in the Paley--Wiener space $\mathcal{PW}_\pi$ with the conjugate indicator
diagram $[-\pi i,\pi i]$ and with simple zeroes. Then
the mixed system
$$
\bigl\{k_{\lambda}\bigr\}_{\lambda\in \mathcal{Z}_{T_1}}\cup\biggl\{\frac{T_1T_2}{\cdot-\lambda}\biggr\}_{\lambda\in\mathcal{Z}_{T_2}}$$
is always complete in $\mathcal{PW}_\pi$.
\end{lemma}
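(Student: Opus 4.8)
The plan is to argue by contradiction: assume $F\in\mathcal{PW}_\pi$ is orthogonal to every vector of the mixed system and show $F\equiv0$. We may assume $T_1,T_2$ are real on $\mathbb R$ with real zeros (this is the case arising in the application, and in general one reduces to it); then all vectors of the system are real on $\mathbb R$, so we may also assume $F$ real on $\mathbb R$. Orthogonality to the kernels $k_\lambda$, $\lambda\in\mathcal Z_{T_1}$, just says $F$ vanishes on $\mathcal Z_{T_1}$; since $T=T_1T_2$ has simple zeros, $\mathcal Z_{T_1}\cap\mathcal Z_{T_2}=\emptyset$ and $F=T_1H$ with $H$ entire and $T_1H\in\mathcal{PW}_\pi$. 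Thus $F$ lies in the de Branges subspace $M=\{G\in\mathcal{PW}_\pi:G|_{\mathcal Z_{T_1}}=0\}$, the map $G\mapsto T_1G$ is an isometry from the de Branges space $\mathcal H:=\{H:\ T_1H\in\mathcal{PW}_\pi\}$ onto $M$, and, since $\langle T_1H,\,T_1\cdot T_2/(z-\lambda)\rangle_{\mathcal{PW}_\pi}=\langle H,\,T_2/(z-\lambda)\rangle_{\mathcal H}$ and $T_2/(z-\lambda)\in\mathcal H$, the lemma is equivalent to the completeness of $\{T_2/(z-\lambda)\}_{\lambda\in\mathcal Z_{T_2}}$ in $\mathcal H$, i.e. to $H\equiv0$.

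To exploit the second family of orthogonality relations I would pass to $\Psi:=FT=T_1^2T_2H$. As $F,T\in\mathcal{PW}_\pi\subset L^\infty(\mathbb R)\cap L^2(\mathbb R)$, we get $\Psi\in L^1(\mathbb R)\cap L^2(\mathbb R)$, entire of exponential type $\le2\pi$, so $\Psi\in\mathcal{PW}_{2\pi}$. Write the Riesz decomposition $\Psi=\Psi_++\Psi_+^{\,*}$, where $\widehat{\Psi_+}$ is supported in $[0,2\pi]$, $g^{\,*}(z):=\overline{g(\bar z)}$ (equivalently $\Psi_+=e^{i\pi z}R$ with $R\in\mathcal{PW}_\pi$, and $\Psi_+=C\Psi$ on $\mathbb C_+$). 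Orthogonality to $T_1T_2/(z-\lambda)$, $\lambda\in\mathcal Z_{T_2}$, reads $\int_{\mathbb R}\Psi(x)/(x-\lambda)\,dx=0$; since $\Psi(\lambda)=0$, the Plemelj formulas give $\int_{\mathbb R}\Psi(x)/(x-\lambda)\,dx=2\pi i\,\Psi_+(\lambda)$, so $\Psi_+$ vanishes on $\mathcal Z_{T_2}$. As $T_2$ has simple zeros, $T_2\mid\Psi_+$; writing $\Psi_+=QT_2$ with $Q$ entire and using $\Psi=QT_2+(QT_2)^{\,*}$, we obtain the key identities
\[
FT_1=Q+Q^{\,*},\qquad \Psi_+=QT_2 .
\]

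It remains to deduce $Q\equiv0$ (hence $\Psi_+\equiv0$, $\Psi\equiv0$, $F\equiv0$), and this is the hard part, where the hypothesis that the conjugate indicator diagram of $T=T_1T_2$ is exactly $[-\pi i,\pi i]$ is essential. First, indicators of Cartwright‑class functions are additive along the imaginary axis, so $h_{T_1}(\pm\pi/2)+h_{T_2}(\pm\pi/2)=\pi$, and with $T_1,T_2$ real this gives $h_{T_1}(\pi/2)=h_{T_1}(-\pi/2)=:\tau_1\ge0$ and $h_{T_2}(\pm\pi/2)=\pi-\tau_1$. Then, tracking the endpoints of the Fourier supports through the products $\widehat\Psi=\widehat F*\widehat T$, $R=T_2\rho$ (with $\rho:=R/T_2$), and $FT_1=e^{i\pi z}\rho+e^{-i\pi z}\rho^{\,*}$ by the Titchmarsh convolution theorem, combined with the inequality $h_g(\pi/2)+h_g(-\pi/2)\ge0$ for every nonzero entire $g$ of exponential type (applied to $F$, to $F/T_1$, and to $\rho$), one pins down $h_F(\pm\pi/2)$, $\tau_1$ and $\sup\supp\widehat\Psi$ in terms of one another; feeding in the $L^2(\mathbb R)$‑membership of $F$ and $R$ — in the borderline cases via a Boole / Khrushchev–Vinogradov type lower bound for $|T_1(x)T_2(x)|$ on a set of infinite logarithmic length, exactly as in the proof of Theorem~\ref{onetwoint} — then leaves no room for a nonzero $F$. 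The main obstacle is precisely this last step: the type‑theoretic bookkeeping by itself yields only a family of mutually consistent inequalities, and the quantitative $L^2$ input (the Boole‑type estimate) is what closes the gap. Equivalently, in the de Branges picture one must show that the closed span of $\{T_2/(z-\lambda)\}_{\lambda\in\mathcal Z_{T_2}}$, which is a de Branges subspace of $\mathcal H$ of the same exponential type $\pi-\tau_1$ as $\mathcal H$, exhausts $\mathcal H$; the indicator‑diagram hypothesis is exactly what forbids an intervening indivisible interval and forces equality.
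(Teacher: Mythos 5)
The paper does not prove this lemma directly: it is quoted from \cite[Proposition 2.1]{B25}, so any self-contained argument is necessarily a different route. Your reductions up to the identities $FT_1=Q+Q^*$, $\Psi_+=QT_2$ are sound and follow the standard scheme (vanishing on $\mathcal Z_{T_1}$ from the kernels; Riesz projection plus Plemelj from the fractions). But the argument stops exactly where the lemma actually lives: you never derive $Q\equiv 0$. The passage ``one pins down $h_F(\pm\pi/2)$, $\tau_1$ and $\sup\supp\widehat\Psi$ in terms of one another \dots\ leaves no room for a nonzero $F$'' is a description of a strategy, not a proof, and you concede yourself that the indicator bookkeeping ``yields only a family of mutually consistent inequalities.'' The Boole/Khrushchev--Vinogradov estimate that is supposed to close the gap is never applied to a concrete function; note moreover that in the proof of Theorem~\ref{onetwoint} that tool produces a two-sided bound $|GS|\asymp|x|^{-1}$ precisely because the Cauchy transform there has nonnegative coefficients $c_n\ge0$, and no such positivity has been established for anything in your setup. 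As written, a hypothetical nonzero $Q$ with $e^{i\pi z}R=QT_2$, $R\in\mathcal{PW}_\pi$, is not excluded by anything on the page, so the decisive step is missing.

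A secondary but genuine problem is the opening reduction ``we may assume $T_1,T_2$ are real on $\mathbb R$ with real zeros (this is the case arising in the application).'' It is not: in the proof of Theorem~\ref{regth} the lemma is applied with $T_1=H$, where $|H(x)|^2\asymp w(x)>0$ on $\mathbb R$, so $\mathcal Z_{T_1}$ contains no real points at all. Your subsequent steps (realness of $F$, $T_2^*=T_2$, the decomposition $\Psi=\Psi_++\Psi_+^*$ with $\Psi=FT$ real on $\mathbb R$) all use this realness, and the claimed reduction of the general case to the real one is not justified. In sum, the proposal reproduces the correct first half of the proof of \cite[Proposition 2.1]{B25} but leaves the essential zero-type/completeness step unproved.
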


This lemma follows immediately from \cite[Proposition 2.1]{B25}. For other versions of this result see \cite{B26,B27}.
\end{proof}

\subsection{Spectrum $\mathbb Z$. Proof of Theorem~\ref{thm6}}

The proof of Theorem \ref{thm6} is based on a combination of Theorem~\ref{onetwoint}, some results on the classical P\'olya problem, 
and the second Beurling--Malliavin theorem.

\begin{proof}[Proof of Theorem~\ref{thm6}] 
First of all, since $\mathcal H=\mathcal H_{L,\nu}=\mathcal H(\sin (\pi z),\mu)$ is regular and the support of $\nu$ and $\mu$ is $\mathbb Z$, 
by the Cartwright theorem \cite{bbook}, the space $\mathcal H$ contains no entire functions of zero exponential type except $0$. 

Set $\mathcal{H}_1=\mathcal H_{t_1,\nu}$, $\mathcal{H}_2=\mathcal H_{t_2,\nu}$.  Using Theorem~\ref{onetwoint} we obtain that if $\dim(\mathcal{H}_1\ominus\mathcal{H}_2)<\infty$, then
$\dim(\mathcal{H}_1\ominus\mathcal{H}_2)\leq1$. Thus, it remains to consider the case $\dim(\mathcal{H}_1\ominus\mathcal{H}_2)=\infty$, 
${\rm Type}(\mathcal{H}_1)={\rm Type}(\mathcal{H}_2)$.

Choose a function $F\in\mathcal{H}_1\setminus\{0\}$
such that $F\perp \mathcal{H}_2$. Let $A_2$ be an $A$-function corresponding to the space $\mathcal{H}_2$ such that $\mathcal{Z}_{A_2}\cap \mathbb Z=\emptyset$.
We have 
\begin{equation}
F\perp \frac{A_2}{\cdot-s_n},\qquad  s_n\in\mathcal{Z}_{A_2}.
\label{orth1}
\end{equation}
Now, relation \eqref{orth1} is equivalent to the interpolation formula
\begin{equation}
\sum_{n\in\mathbb{Z}}\frac{F(n)A_2(n)}{\mu_n(z-n)}=\frac{A_2(z)S(z)}{\sin(\pi z)},
\label{orth2}
\end{equation}
for some entire function $S$. Since there exists infinitely many linear independent functions $F$ satisfying \eqref{orth2}
we can assume that the functions $F$ and $S$ have at least $100$ common non-integer zeroes $\lambda_1,\ldots,\lambda_{100}$.
Set $P(z)=\prod_{k=1}^{100}(z-\lambda_k)$. From \eqref{orth2} we conclude that 
$$
\sum_{n\in\mathbb{Z}}\frac{F(n)A_2(n)}{(n-\lambda_1)\mu_n(z-n)}=\frac{A_2(z)S(z)}{(z-\lambda_1)\sin(\pi z)},
$$
and then, by induction, 
$$
\sum_{n\in\mathbb{Z}}\frac{F(n)A_2(n)}{P(n)\mu_n(z-n)}=\frac{A_2(z)S(z)}{P(z)\sin(\pi z)}.
$$ 
Hence,
\begin{equation}
\frac{F}{P}\perp \frac{A_2}{\cdot-s_n},\qquad  s_n\in\mathcal{Z}_{A_2}.
\label{orth4}
\end{equation}
Thus, we can assume that our function $F$ satisfies the inequality $|F(x)|\le |x|^{-100}$, and is real on the real line. 
Using Lemmas~\ref{ap3},\ref{ap1} we find such a representation $\mathcal{HC}(T,\gamma)$ of our space that 
the zeroes of $F$ on the real line are away from the support $(x_n)_{n\in\mathbb Z}$ of $\gamma=\sum_{n\in\mathbb Z}\gamma_n\delta_{x_n}$:  
\begin{equation}
\text{the set\ }\mathcal{Z}_F\cap \bigcup_{n\in\mathbb Z\setminus\{0\}}\bigl[x_n-|n|^{-10},x_n+|n|^{-10}\bigr]\text{\ is bounded}.
\label{dr1}
\end{equation}

By \eqref{orth4}, using this representation $\mathcal{HC}(T,\nu)$, we obtain that 
$$
\sum_{n\in\mathbb{Z}}\frac{F(x_n)A_2(x_n)}{\gamma_n(\cdot-x_n)}=\frac{A_2U}{T}
$$
for some entire function $U$. Moreover, $t(A_2)+t(U)\le t(T)=1$ and  $0<t(F)\le t(A_2)$. By comparing residues we obtain 
\begin{gather}
U(x_n)=F(x_n)\gamma_n^{-1}T'(x_n),\nonumber\\
|U(x_n)|\lesssim (1+|x_n|)^{-10},\qquad  n\in\mathbb Z.
\label{dr2}
\end{gather}
Hence,
$$
\frac{UF}{T}=\sum_{n\in\mathbb{Z}}\frac{U(x_n)F(x_n)}{T'(x_n)(\cdot-x_n)}+R=
\sum_{n\in\mathbb{Z}}\frac{F^2(n)}{\gamma_n(\cdot-x_n)}+R,
$$
for some entire function $R$ of zero exponential type which is real on the real line.

{\bf Case 1.} $R$ is a polynomial. Then the zeroes of the product $UF$ are sufficiently close to the support of $\gamma$, which contradicts to \eqref{dr1}.

{\bf Case 2.} $R$ is a transcendental entire function of zero exponential type. 
The product $UF$ has at least one zero on every interval $(n,n+1)$. By \eqref{dr1} and \eqref{dr2}, $R$ is bounded on $\Sigma=\mathcal{Z}_F\cap\mathbb R$.

As in \cite{B26}, we use now some information on the classical P\'olya problem
and the second Beurling--Malliavin theorem. 

A sequence $X=\{x_n\} \subset \mathbb{R}$ is a P\'olya sequence 
if any entire function of zero exponential type which is bounded on $X$
is a constant. We say that a disjoint sequence of intervals 
$\{I_n\}$ on the real line is a long sequence of intervals if 
$$
\sum_n\frac{|I_n|^2}{1+\dist^2(0,I_n)}=+\infty.
$$

Since $\Sigma$ is not a P\'olya sequence and is a union of two separated
sequences, a theorem by Mitkovski--Poltoratski \cite{mp} (see also the discussion in \cite{B26}) gives that  
there exists a long sequence of intervals $\{I_n\}$ such that 
$$
\frac{\card(\Sigma \cap I_n)}{|I_n|}\to 0.
$$

Therefore, if $\Sigma'=\mathcal{Z}_U\cap\mathbb R$, then 
$$
\frac{\card(\Sigma' \cap I_n)}{|I_n|}\to 1.
$$

By the second Beurling--Malliavin theorem \cite{bm}, we obtain that $t(U)\ge 1$, and, hence, $t(F)$=0.
This contradiction completes the proof.
\end{proof}

\section{Isometric Cauchy transform representations for de Branges spaces}
\label{se5}

We start with two standard results. For reader's convenience we formulate them here and give the proofs.

\begin{lemma} 
\label{ap2}
Given a de Branges space $\mathcal{HC}(A,\mu)$, its reproducing kernel is 
$$
K_w(z)=A(z)\overline{A(w)}\sum_{n\in\mathcal N}\frac{\mu_n}{(z-t_n)(\overline{w}-t_n)}.
$$
If $w_1,w_2\not\in \supp\mu$, then
\begin{equation}
K_{w_1}(w_2)=A(w_2)\overline{A(w_1)}\frac{\psi(w_2)-\overline{\psi(w_1)}}{\overline{w_1}-w_2},
\label{drg1}
\end{equation}
where
$$
\psi(z)=\sum_{n\in\mathcal N}\mu_n\Bigl(\frac1{z-t_n}+\frac{t_n}{t^2_n+1}\Bigr).
$$
\end{lemma}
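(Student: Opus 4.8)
The plan is to read off the first formula from the isometric $\ell^2$-model of the space, and then to obtain the closed form \eqref{drg1} by a telescoping computation in which the regularizing constants $t_n/(t_n^2+1)$ occurring in the definition of $\psi$ cancel.

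First I would use that $f\mapsto\{a_n\}_{n\in\mathcal N}$ is, by definition, an isometric isomorphism of $\mathcal{HC}(A,\mu)$ onto $\ell^2(\mathcal N)$, where $f(z)=A(z)\sum_{n}a_n\mu_n^{1/2}/(z-t_n)$. For a fixed $w$ one has $f(w)=\sum_n a_n\overline{c_n}$ with $c_n=\overline{A(w)}\,\mu_n^{1/2}/(\overline w-t_n)$, using $t_n\in\mathbb R$. Since point evaluations are bounded on $\mathcal{HC}(A,\mu)$, the sequence $\{c_n\}$ lies in $\ell^2$ and represents the reproducing kernel $K_w$; transporting it back through the isometry gives
$$
K_w(z)=A(z)\sum_n\frac{c_n\mu_n^{1/2}}{z-t_n}=A(z)\overline{A(w)}\sum_n\frac{\mu_n}{(z-t_n)(\overline w-t_n)},
$$
which is the stated formula (for $w=t_n\in\supp\mu$ the $n$-th summand is to be read as the already computed $\mu_nA'(t_n)A(z)/(z-t_n)$, all other summands vanishing since $\overline{A(t_n)}=0$). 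Its reproducing property $\langle f,K_w\rangle=f(w)$ is then immediate by substitution; alternatively one may cite the standard de Branges description of this kernel.

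For \eqref{drg1} I take $w=w_1$, $z=w_2$ with $w_1,w_2\notin\supp\mu$, so that all denominators below are nonzero. As $\mu_n$ and $t_n$ are real, $\overline{\psi(w_1)}=\sum_n\mu_n\bigl(\frac1{\overline{w_1}-t_n}+\frac{t_n}{t_n^2+1}\bigr)$, whence the constant terms cancel in the difference:
$$
\psi(w_2)-\overline{\psi(w_1)}=\sum_n\mu_n\Bigl(\frac1{w_2-t_n}-\frac1{\overline{w_1}-t_n}\Bigr)=(\overline{w_1}-w_2)\sum_n\frac{\mu_n}{(w_2-t_n)(\overline{w_1}-t_n)}.
$$
Dividing by $\overline{w_1}-w_2$ and multiplying by $A(w_2)\overline{A(w_1)}$ turns the previous display into \eqref{drg1}.

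The only points needing care are the convergence and rearrangement of the series, and there is no genuine obstacle. The series defining $\psi$ converges for every $z\notin\supp\mu$ because $\frac1{z-t_n}+\frac{t_n}{t_n^2+1}=\frac{1+zt_n}{(z-t_n)(t_n^2+1)}$ has modulus $\lesssim t_n^{-2}$ for $|t_n|$ large (the implied constant depending on the fixed $z$), while the finitely many terms with $|t_n|$ bounded are harmless as $z\notin\supp\mu$; hence the general term is $\lesssim\mu_n/t_n^2$ and \eqref{mu1} applies. The same bound gives $\sum_n\mu_n/\bigl|(w_2-t_n)(\overline{w_1}-t_n)\bigr|<\infty$, which both legitimises the rearrangements above and shows that the series in the first formula converges absolutely at $z=w_2$. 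Thus the statement is essentially bookkeeping once the isometric model and \eqref{mu1} are in hand.
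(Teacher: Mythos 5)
Your proposal is correct and is exactly the computation the paper leaves implicit (its proof of Lemma~\ref{ap2} is just ``By observation''): identify $K_w$ through the isometry with $\ell^2$, then cancel the regularizing constants in $\psi(w_2)-\overline{\psi(w_1)}$ and telescope, with absolute convergence supplied by \eqref{mu1}. Nothing further is needed.
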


\begin{proof} By observation.
\end{proof}

\begin{lemma} 
\label{ap3}
Given a regular de Branges space $\mathcal{H}=\mathcal{HC}(A_0,\mu)$, $A_0(z)=\sin(\pi z)$, and $u\in\mathbb R$, set 
$$
T=A_0\cdot(\psi-u).
$$
Then $T$ is an entire function of exponential type real on the real line, with the conjugate indicator
diagram $[-\pi i,\pi i]$. 
For every $n\in\mathbb Z$, $T$ has exactly one simple zero $x_n$ on $(n,n+1)$, $\psi(x_n)=u$, and $\mathcal{Z}_T=\{x_n\}_{n\in\mathbb Z}$.

Next, $T\not\in \mathcal{H}$, $\{K_{x_n}\}_{n\in\mathbb Z}$ is an orthogonal basis in $\mathcal{H}$, and 
$$
\mathcal{H}=\mathcal{HC}(T,\nu),
$$
where
$$
\nu=\sum_{n\in\mathbb Z}\|T/(\cdot-x_n)\|^{-2}_{\mathcal{H}}\delta_{x_n}.
$$
\end{lemma}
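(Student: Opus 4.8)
The plan is to extract every assertion from the explicit kernel formula \eqref{drg1} together with the elementary observation that $\psi$ takes real values only on $\mathbb R$ and is strictly decreasing between consecutive poles. First I would record that the series defining $\psi$ converges uniformly on compact subsets of $\mathbb C\sm\mathbb Z$ (its general term is $O(1/(n^2+1))$ on such sets, by \eqref{mu1}), so $\psi$ is meromorphic with a simple pole of residue $\mu_n$ at each $n\in\mathbb Z$, and $\ima\psi(z)=-\ima z\cdot\sum_n\mu_n|z-n|^{-2}$. Hence $\psi$ is real only on $\mathbb R$, while $\psi'(x)=-\sum_n\mu_n(x-n)^{-2}<0$ on $\mathbb R\sm\mathbb Z$, so on each $(n,n+1)$ the function $\psi$ decreases continuously from $+\infty$ to $-\infty$; this yields a unique $x_n\in(n,n+1)$ with $\psi(x_n)=u$, and these $x_n$ are exactly the zeros of $\psi-u$, all real, all simple, none an integer. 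In $T=A_0(\psi-u)$ the simple zeros of $A_0=\sin\pi z$ cancel the simple poles of $\psi$, with $T(n)=A_0'(n)\mu_n=(-1)^n\pi\mu_n\ne0$; so $T$ is entire, real on $\mathbb R$, with $\mathcal Z_T=\{x_n\}_{n\in\mathbb Z}$ and $T'(x_n)=A_0(x_n)\psi'(x_n)\ne0$, i.e.\ all its zeros are simple.

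For the exponential type I would argue via \eqref{drg1} with a fixed non-real $w$: the function $z\mapsto A_0(z)\bigl(\psi(z)-\overline{\psi(w)}\bigr)$ equals $(\overline w-z)K_w(z)/\overline{A_0(w)}$, a polynomial multiple of a reproducing kernel of the regular (hence Cartwright-class) space $\mathcal H$, so it belongs to the Cartwright class; adding the constant multiple $(\overline{\psi(w)}-u)A_0$ shows $T$ itself is in the Cartwright class. Being real on $\mathbb R$, its conjugate indicator diagram is then the symmetric segment $[-ic,ic]$ with $c=\lim_{y\to+\infty}y^{-1}\log|T(iy)|$, and the elementary two-sided bound $\mu_0/y\le-\ima\psi(iy)=y\sum_n\mu_n(n^2+y^2)^{-1}\le y\sum_n\mu_n(n^2+1)^{-1}$ for $y\ge1$, combined with $|A_0(iy)|=|\sinh\pi y|\asymp e^{\pi y}$, gives $c=\pi$. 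Thus $T$ has exponential type $\pi$ and conjugate indicator diagram $[-\pi i,\pi i]$.

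For the space-theoretic part, specialising \eqref{drg1} to $w_1=x_k$, $w_2=z$ and using $\psi(x_k)=u$ gives the identity $K_{x_k}(z)=-A_0(x_k)\,T(z)/(z-x_k)$. In particular $K_{x_n}(x_k)=0$ for $n\ne k$, so $\{K_{x_n}\}_{n\in\mathbb Z}$ is an orthogonal set with $\|K_{x_n}\|^2=K_{x_n}(x_n)=-A_0(x_n)^2\psi'(x_n)>0$ and $\|T/(\cdot-x_n)\|^{-2}_{\mathcal H}=A_0(x_n)^2/\|K_{x_n}\|^2=\nu_n$. Assuming for the moment that $\{K_{x_n}\}$ is complete, I then expand an arbitrary $f\in\mathcal H$: from the identity for $K_{x_n}$,
$$
f=\sum_n\frac{f(x_n)}{\|K_{x_n}\|^2}K_{x_n}=-T(z)\sum_n\frac{A_0(x_n)f(x_n)}{\|K_{x_n}\|^2}\cdot\frac1{z-x_n},\qquad \|f\|_{\mathcal H}^2=\sum_n\frac{|f(x_n)|^2}{\|K_{x_n}\|^2}=\sum_n|b_n|^2,
$$
where $b_n\nu_n^{1/2}=-A_0(x_n)f(x_n)\|K_{x_n}\|^{-2}$; this is exactly the isometric identification $\mathcal H=\mathcal{HC}(T,\nu)$. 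It remains to justify $T\notin\mathcal H$ and the completeness of $\{K_{x_n}\}$. If $T=A_0\sum_n a_n\mu_n^{1/2}(\cdot-n)^{-1}$ with $(a_n)\in\ell^2$, comparing residues at $\mathbb Z$ forces $a_n=\mu_n^{1/2}$, hence $\sum_n\mu_n<\infty$; but this is impossible when $\supp\mu=\mathbb Z$ and $\mathcal H$ is regular — by \eqref{mu1}, \eqref{mu2} and Cauchy--Schwarz, $\#\{n:N<|n|\le 2N\}\le\bigl(\sum_{N<|n|\le2N}\mu_n(n^2+1)\bigr)^{1/2}\bigl(\sum_{N<|n|\le2N}(\mu_n(n^2+1))^{-1}\bigr)^{1/2}$, which forces $\sum_{N<|n|\le2N}\mu_n\to\infty$ (equivalently, the chain has no indivisible end interval $(a,L]$, cf.\ Section~\ref{se2}). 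Finally, $T$ is, up to a nonzero real factor, the function $A_0\cos\alpha+B\sin\alpha$ attached to the Hermite--Biehler function $E=A_0-iB$ of $\mathcal H$ — here $B=A_0(-\pi\psi+c_0)$ with $c_0$ read off from \eqref{drg1}, and $\cot\alpha=\pi u-c_0$ — so de Branges' phase-function sampling theorem \cite{bo1} applies and gives that $\{K_{x_n}\}$ is an orthogonal basis of $\mathcal H$, exactly because this ``$A$-function at angle $\alpha$'' does not lie in $\mathcal H$.

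I expect the only non-formal points to be the Cartwright-class membership of $T$ (handled cheaply by \eqref{drg1}) and, above all, the completeness of $\{K_{x_n}\}$: the latter is the de Branges sampling theorem, whose hypothesis $T\notin\mathcal H$ is precisely where the regularity of $\mathcal H$ and the equality $\supp\mu=\mathbb Z$ are used.
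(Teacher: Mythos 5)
Your proposal is correct and, for most of the statement, follows the same route as the paper: the orthogonality of $\{K_{x_n}\}$ and the identity $K_{x_n}(z)=-A_0(x_n)T(z)/(z-x_n)$ come from \eqref{drg1}; the claim $T\notin\mathcal H$ is obtained by comparing values at the integers to force $a_n=\mu_n^{1/2}$, hence $\sum_n\mu_n<\infty$, incompatible with \eqref{mu2} (you make the Cauchy--Schwarz step explicit, which the paper leaves implicit); and the isometric identification $\mathcal H=\mathcal{HC}(T,\nu)$ is the same expansion in the orthogonal basis. You diverge in two places. For the indicator diagram, the paper simply invokes the interlacing of $\mathcal Z_T$ with $\mathcal Z_{A_0}$, while you go through the Cartwright class via \eqref{drg1} and the asymptotics of $\psi(iy)$; your computation bounds only $\ima\psi(iy)$, and the upper bound $|\psi(iy)|=O(y)$ also needs the (equally elementary) control of $\rea\psi(iy)$, but this is a one-line repair. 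The more substantive difference is completeness of $\{K_{x_n}\}$: the paper argues directly that any $F\perp K_{x_n}$ factors as $F=TS$ with $S=(F/A_0)(A_0/T)$ of zero exponential type and $|S(iy)|=O(|y|)$ (using $|A_0(iy)|=O(|yT(iy)|)$, again from interlacing), so $S$ is a polynomial of degree at most one and division gives $T\in\mathcal H$, a contradiction; you instead identify $T$ up to a constant with $A_0\cos\alpha+B\sin\alpha$ for the Hermite--Biehler pair of $\mathcal H$ and cite de Branges' phase sampling theorem, whose hypothesis is exactly $T\notin\mathcal H$. Both are legitimate; the paper's version is self-contained and avoids having to set up the $E=A_0-iB$ representation, while yours outsources the hard step to a standard quotable theorem at the cost of verifying (which you do only in outline) that $B=-\pi A_0\psi$ modulo $\mathbb R A_0$ is indeed the $B$-function of $\mathcal{HC}(A_0,\mu)$.
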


\begin{proof} Since the zeros of $T$ and $A_0$ are interlacing, $T$ is of exponential type with the conjugate indicator
diagram $[-\pi i,\pi i]$, and 
\begin{equation}
|A_0(iy)|=O(|yT(iy)|),\qquad |y|\to\infty.
\label{drg}
\end{equation}

If $T=A_0\cdot(\psi-u)\in\mathcal{H}$, then
$$
A_0(z)\bigl(\psi(z)-u\bigr)=A_0(z)\sum_{n\in\mathbb Z}\frac{a_n\mu^{1\slash2}_n}{z-n}
$$
for some sequence $\{a_n\}_{n\in\mathbb Z}\in\ell^2$. 
Comparing the values at the integer points we obtain that $a_n=\mu^{1\slash2}_n$, $n\in\mathbb Z$, and, hence, $\sum_{n\in\mathbb Z} \mu_n<\infty$, which contradicts to 
\eqref{mu2}.

By formula \eqref{drg1}, $\{K_{x_n}\}_{n\in\mathbb Z}$ is an orthogonal system in $\mathcal{H}$. 
If $F\in\mathcal{H}\setminus\{0\}$ is orthogonal to $\{K_{x_n}\}_{n\in\mathbb Z}$, then $F=TS$ for some entire function $S$. By \eqref{drg}, we obtain that 
$|S(iy)|=O(|y|)$, $|y|\to\infty$. Furthermore, 
$$
S=\frac{F}{A_0}\cdot\frac{A_0}{T}
$$
is of zero exponential type. Therefore, $S$ is a polynomial of order at most $1$. Dividing $F$ by $S$ we obtain that $T\in \mathcal{H}$, which is impossible. 
Thus,  $\{K_{x_n}\}_{n\in\mathbb Z}$ is an orthogonal basis in $\mathcal{H}$. 

Denote 
$$
\nu_n=\|T/(\cdot-x_n)\|^{-2}_{\mathcal{H}}.
$$
Since $\{T/(\cdot-x_n)\}_{n\in\mathbb Z}$ is a biorthogonal system to $\{K_{x_n}\}_{n\in\mathbb Z}$, it is an orthogonal basis in $\mathcal{H}$. 
Hence, for every $f\in \mathcal{H}$ we have 
$$
f(z)=T(z)\sum_{n\in\mathbb Z}\frac1{z-x_n}\cdot\frac{\langle f,T/(\cdot-x_n) \rangle}{\|T/(\cdot-x_n)\|^2_{\mathcal{H}}}=
T(z)\sum_{n\in\mathbb Z}\frac{a_n\nu_n^{1/2}}{z-x_n},
$$
where
$a_n=\langle f,T/(\cdot-x_n) \rangle\nu_n^{1/2}$, $n\in\mathbb Z$, and $\{a_n\}_{n\in\mathbb Z}\in\ell^2$.
Therefore,
$$
\mathcal{H}\subset \mathcal{HC}(T,\nu),
$$ 
where $\nu=\sum_{n\in\mathbb Z}\nu_n\delta_{x_n}$, and the inclusion is isometric. Finally, again since $\{T/(\cdot-x_n)\}_{n\in\mathbb Z}$ is an orthogonal basis in $\mathcal{H}$, we have $\mathcal{H}=\mathcal{HC}(T,\nu)$. 
\end{proof}

Next we show that for every subset $\Lambda$ of $\mathbb R$ of finite upper linear density, we can find an isometric representation of our space with respect to a measure somewhat 
separated from $\Lambda$.

\begin{lemma} 
\label{ap1}
In the conditions of Lemma~\ref{ap3}, given a sequence of points $\{y_k\}_{k\ge 1}$ of finite upper linear density, we can find $u\in\mathbb R$ such that 
the intersection 
$$
\{y_k\}_{k\ge 1}\cap \bigcup_{n\in\mathbb Z\setminus\{0\}}\bigl[x_n-|n|^{-10},x_n+|n|^{-10}\bigr]
$$
is bounded.
\end{lemma}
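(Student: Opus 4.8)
The plan is to choose $u$ at random in a fixed bounded interval, say $u\in[-1,1]$, and to prove that almost every such $u$ works; since $[-1,1]$ has positive measure, this yields the claim. Recall from Lemma~\ref{ap3} that, with $T=A_0(\psi-u)$, the zero $x_n=x_n(u)$ is the unique point of $(n,n+1)$ with $\psi(x_n(u))=u$; thus $u\mapsto x_n(u)$ is a strictly decreasing real-analytic homeomorphism of $\mathbb R$ onto $(n,n+1)$ with inverse $\psi|_{(n,n+1)}$. For $n\ne0$ put
$$
\mathrm{Bad}_n=\bigl\{u\in[-1,1]:\dist\bigl(x_n(u),\{y_k\}\bigr)\le|n|^{-10}\bigr\}.
$$
If $u$ lies in only finitely many $\mathrm{Bad}_n$, then $[\,x_n(u)-|n|^{-10},\,x_n(u)+|n|^{-10}\,]\cap\{y_k\}=\emptyset$ for all but finitely many $n$, so the set in the statement is contained in a finite union of bounded intervals and is bounded. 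Hence it is enough to show $\sum_{n\ne0}|\mathrm{Bad}_n|<\infty$ and apply the first Borel--Cantelli lemma.

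To estimate $|\mathrm{Bad}_n|$ I would change variables $x=x_n(u)$, i.e.\ $u=\psi(x)$. As $u$ runs over $[-1,1]$ the point $x=x_n(u)$ runs over the interval $[\alpha_n,\beta_n]\subset(n,n+1)$ defined by $\psi(\alpha_n)=1$, $\psi(\beta_n)=-1$, and
$$
|\mathrm{Bad}_n|=\int_{E_n\cap[\alpha_n,\beta_n]}|\psi'(x)|\,dx,\qquad E_n=\{x:\dist(x,\{y_k\})\le|n|^{-10}\}\cap(n,n+1).
$$
One has $|E_n|\le 2c_n|n|^{-10}$, where $c_n$ is the number of the $y_k$ in $(n-1,n+2)$, and the finite upper linear density of $\{y_k\}$ gives $\sum_{0<|n|\le N}c_n\lesssim N$.

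It remains to bound $|\psi'|$ on $[\alpha_n,\beta_n]$, and this is the only place the regularity of $\mathcal H$ is used. From \eqref{mu1} one gets $\mu_m\lesssim m^2$, and from \eqref{mu2} (using $A_0'(m)^2=\pi^2$) one gets $\mu_m\gtrsim m^{-2}$. Splitting the defining series of $\psi$ and $\psi'$ into the two nearest poles, a remote part convergent by \eqref{mu1}, and an intermediate part, one writes on $(n,n+1)$
$$
\psi(x)=\frac{\mu_n}{x-n}-\frac{\mu_{n+1}}{n+1-x}+g_n(x),\qquad -\psi'(x)=\frac{\mu_n}{(x-n)^2}+\frac{\mu_{n+1}}{(n+1-x)^2}+R_n(x),
$$
with $|g_n(x)|\lesssim|n|^{3}$ and $0\le R_n(x)\lesssim|n|^2$. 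Since $|\psi|\le1$ on $[\alpha_n,\beta_n]$, on the subinterval where $x\le n+1/2$ one has $\mu_n/(x-n)\le 1+\mu_{n+1}/(n+1-x)+|g_n(x)|\lesssim|n|^3$; taking $x=\alpha_n$ gives $\alpha_n-n\gtrsim\mu_n|n|^{-3}\gtrsim|n|^{-5}$, hence $\mu_n/(x-n)^2\lesssim|n|^3/(\alpha_n-n)\lesssim|n|^{8}$ there, while $\mu_{n+1}/(n+1-x)^2\lesssim\mu_{n+1}\lesssim|n|^2$. The symmetric estimate holds where $x\ge n+1/2$, so $|\psi'|\lesssim|n|^{8}$ on $[\alpha_n,\beta_n]$. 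Therefore $|\mathrm{Bad}_n|\lesssim|n|^{8}\,c_n|n|^{-10}=c_n|n|^{-2}$, and summation by parts with $\sum_{0<|n|\le N}c_n\lesssim N$ gives $\sum_{n\ne0}|\mathrm{Bad}_n|\lesssim\sum_{n\ne0}c_n|n|^{-2}<\infty$, as needed.

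The main obstacle is precisely the polynomial bound $|\psi'|\lesssim|n|^{8}$ on the moderate set $\{|\psi|\le1\}\cap(n,n+1)$ — equivalently, the fact that restricting $u$ to a bounded interval keeps $x_n(u)$ at distance $\gtrsim|n|^{-5}$ from the integers — where the regularity hypothesis \eqref{mu2} enters through $\mu_m\gtrsim m^{-2}$. This is what makes the fixed exponent $10$ sufficient (any exponent $>9$ would do after the $|n|^{8}$ loss), and restricting to a bounded range of $u$ is essential, since $\psi'$ is unbounded near each integer.
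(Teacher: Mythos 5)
Your argument is correct and is essentially the paper's own proof: both parametrize the representation by $u$, bound the measure of the bad parameter set via a polynomial bound on the derivative of the change of variables (using $\mu_m\gtrsim m^{-2}$ and $\mu_m\lesssim m^2$ from \eqref{mu1}--\eqref{mu2}), and conclude by Borel--Cantelli together with the finite upper density of $\{y_k\}$. The only cosmetic difference is that the paper compactifies the parameter by working with $h=\arctan\psi$ and the bound $|h'(t)|=O(t^6)$ on all of $\mathbb R$, whereas you restrict $u$ to $[-1,1]$ and bound $|\psi'|\lesssim|n|^{8}$ on the preimage $\{|\psi|\le1\}\cap(n,n+1)$.
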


\begin{proof} Set $h=\arctan\psi$. We have
$$
|h'(t)|=\frac{\sum_{n\in\mathbb Z}\frac{\mu_n}{(t-n)^2}}{\bigl(\sum_{n\in\mathbb Z}\mu_n\bigl(\frac1{t-n}+\frac{n}{n^2+1}\bigr)\bigr)^2+1}.
$$
Let $t\in(m,m+1)$. Without loss of regularity we can assume that $m\ge1$, $s=t-m\le 1/2$.
Since
$$
\sum_{n\in\mathbb Z}\frac{\mu_n}{n^2+1}<\infty,
$$
we have
\begin{align*}
\sum_{n\in\mathbb Z}\frac{\mu_n}{(t-n)^2}&\lesssim m^2+\mu_ms^{-2},\\
\sum_{n\in\mathbb Z}\mu_n\Bigl(\frac1{t-n}+\frac{n}{n^2+1}\Bigr)&\ge \mu_ms^{-1}-O(m^2).
\end{align*}
Hence, since 
$$
\sum_{n\in\mathbb Z}\frac1{(n^2+1)\mu_n}<\infty, 
$$
we obtain  
\begin{equation}
|h'(t)|=O(t^6),\qquad |t|\to\infty.
\label{drg3}
\end{equation}

Given $k\ge 1$, choose $n$ such that $y_k\in[n,n+1)$ and denote by $\ell_k$ the length of the set 
$$
J_k=h\bigl([y_k-2|n|^{-10},y_k+2|n|^{-10}]\bigr).
$$
By \eqref{drg3}, $\sum_{k\ge 1}\ell_k<\infty$.  Therefore, we can find $u\in\mathbb R$ which belongs to at most finitely many sets $J_k$. 
Then for sufficiently large $k$,
$$
\tan u\not\in \psi\bigl([y_k-2|n|^{-10},y_k+2|n|^{-10}]\bigr).
$$
If $\psi(s)=\tan u$, $s\in(n,n+1)$, then $s\not\in \bigl[y_k-2|n|^{-10},y_k+2|n|^{-10}\bigr]$ and, hence, $y_k\not\in \bigl[s-|n|^{-10},s+|n|^{-10}\bigr]$. 
The same is true for $s\in(n-1,n)\cup(n+1,n+2)$. This completes the proof.
\end{proof}

\end{document}